\DeclarePairedDelimiter{\ceil}{\lceil}{\rceil}
\DeclarePairedDelimiter{\floor}{\lfloor}{\rfloor}
\DeclarePairedDelimiter{\brac}{ \{}{ \}}
\DeclarePairedDelimiter{\hk}{  (}{ )}
\theoremstyle{plain}
\newtheorem{theorem}{Theorem}[section]
\newtheorem{lemma}[theorem]{Lemma}
\newtheorem{cor}[theorem]{Corollary}
\newtheorem{prop}[theorem]{Proposition}
\theoremstyle{definition}
\newtheorem{defn}[theorem]{Definition}
\newtheorem{example}[theorem]{Example}
\newtheorem{conj}{\textbf{Conjecture}}
\theoremstyle{remark}
\newtheorem{remark}[theorem]{Remark}
\title{\textbf{On the Fundamental Arithmetical Structure and Distribution of Lucky Numbers}}
\author{Marthinus Michael Dreeckmeier\\
\small School of Mathematics\\
\small University Of the Witwatersrand, Johannesburg\\
\small \texttt{michaeldreeck@gmail.com}
}
\date{\today}
\begin{document}

\maketitle

\begin{abstract}
    In this article, we will use elementary number theory techniques to investigate a sequence of integers defined by a sifting process called the lucky numbers. Ulam introduced lucky numbers as a sieve-based analogue of prime numbers. We derive an exact formula for the $n$th lucky number, providing a new tool for quantitative analysis. We formulate and prove a version of the Fundamental Theorem of Arithmetic for lucky numbers. This theorem provides an entirely new viewpoint on number theory. Building on the fundamental theorem, we introduce foundational definitions and analogues of arithmetical functions. Additionally, we prove an analogue of Bertrand's postulate for lucky numbers. Finally, we use the formula for the $n$th lucky number to prove a new result on the order of magnitude of the gaps between consecutive lucky numbers. We obtain an asymptotic bound that is much stronger than the best known bound for primes, and therefore obtain the first result that is known for lucky numbers but still conjectured to be true for primes. Together, these results establish a foundation for the arithmetic theory of lucky numbers and contribute to a broader understanding of sieve-generated sequences.
\end{abstract}
\bigskip
\noindent \textit{Keywords:} lucky numbers, sieve methods, fundamental theorem, analogue, sequences of integers, arithmetic.

\section{Introduction}
Mathematicians have been studying prime numbers for thousands of years. Euclid first articulated the fundamental theorem of arithmetic, later proved in full generality by Gauss. The fundamental theorem of arithmetic is one of the main reasons why prime numbers are so special and have been the central object of study in number theory.
One of the earliest known methods for generating primes is the sieve of Eratosthenes, an ancient algorithm attributed to Eratosthenes of Cyrene.
Since prime numbers hold such great significance in mathematics and other areas like cryptography, it is in our interest to start looking at more general sequences of integers similar to prime numbers. One of these is called lucky numbers \cite{r1}. Lucky numbers are obtained by using a sieve similar to the sieve of Eratosthenes for prime numbers.

The process of finding lucky numbers is as follows, the first lucky number is defined as 1. Start with the set $A_0=\mathbb{N}$. Remove every second number from the set $A_0$, call this new set $A_1$. So $A_1=\{1,3,5,7,\dots\}$. Now, take the next remaining number in $A_1$, in this case it would be 3, and remove every third number in $A_1$, call this new set $A_2$. So $A_2=\{1,3,7,9,\dots\}$  Repeat this process and the remaining set, $A_\infty$, would be the set of all lucky numbers.  It follows that the set of lucky numbers is $$A_\infty=\{1, 3, 7, 9, 13, 15, 21, 25, 31, 33, 37, 43, 49, 51, 63,  \dots \}.$$
It is already known that lucky numbers have a distribution similar to prime numbers. This is a result of the lucky number theorem \cite{r2} - an analogue of the prime number theorem. In \cite{r2}, a recurrence relation is also given for lucky numbers.

In the book \textit{Unsolved Problems in Number Theory} by Richard K. Guy \cite{r4}, lucky numbers are discussed. Apart from the lucky number theorem, little is known about lucky numbers, but it is suggested that questions analogous to known results and conjectures about primes can be posed for lucky numbers.

The Riemann Hypothesis is arguably one of the most significant open problems related to the distribution of primes and is among the six remaining Millennium Prize Problems. Many other open questions about primes depend on the assumption that the Riemann Hypothesis is true. One such result, proven by Cramér under this assumption, is presented in his paper titled "On the order of magnitude of the difference between consecutive prime numbers" \cite{r5}. The result he obtained is the following. 
\begin{theorem}\label{thm7}
    Assuming that the Riemann Hypothesis is true, the following holds 
    $$p_{n+1}-p_n=O(\sqrt{p_n}\log p_n).$$ 
\end{theorem}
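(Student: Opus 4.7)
The plan is to reduce the prime-gap bound to an analytic estimate on the Chebyshev function $\psi(x)=\sum_{n\le x}\Lambda(n)$ via von Mangoldt's explicit formula. I would start from the identity
$$\psi(x)=x-\sum_\rho \frac{x^\rho}{\rho}-\log(2\pi)-\tfrac{1}{2}\log(1-x^{-2}),$$
where $\rho$ ranges over the non-trivial zeros of $\zeta$. Under the Riemann Hypothesis every such $\rho$ has $\operatorname{Re}(\rho)=\tfrac{1}{2}$, so each summand has modulus $\sqrt{x}/|\rho|$, and the analytic hypothesis is doing all the real work here.

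Next I would truncate the sum over zeros at height $T$ and use the classical zero-counting formula $N(T)=\tfrac{T}{2\pi}\log\tfrac{T}{2\pi e}+O(\log T)$ to estimate both the truncated sum and the tail. Optimizing $T$ as a power of $x$ yields $\psi(x)=x+O(\sqrt{x}\log^2 x)$. Applying this at the endpoints $x$ and $x+h$ and subtracting gives
$$\psi(x+h)-\psi(x)=h+O\bigl(\sqrt{x}\log^2 x\bigr),$$
so choosing $h$ to be a sufficiently large constant times $\sqrt{x}\log^2 x$ forces the left-hand side to be positive, and hence guarantees a prime power in $(x,x+h]$. Since the number of proper prime powers $p^k$ with $k\ge 2$ up to $x$ is only $O(\sqrt{x})$, their contribution is negligible compared to $h$, and for large $x$ there must be an actual prime in the interval.

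To sharpen the logarithmic factor from $\log^2 p_n$ down to $\log p_n$ as the statement demands, I would follow Cramér's original refinement: work with a smoothed version of the difference $\psi(x+h)-\psi(x)$ (a Riesz-type or Cesàro-type average), so that the explicit-formula sum over zeros gains one extra logarithmic saving from the decay of the smoothing kernel. Setting $h\asymp \sqrt{x}\log x$ then already produces a prime in the interval, and substituting $x=p_n$ delivers the stated bound $p_{n+1}-p_n=O(\sqrt{p_n}\log p_n)$.

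The hard part will be the control of $\sum_\rho x^\rho/\rho$: individual terms are only of size $\sqrt{x}/|\rho|$, but the series is merely conditionally convergent, so the logarithmic savings cannot be read off term by term and must come from genuine cancellation. This is where RH is used most heavily, since it confines every $x^\rho$ to modulus $\sqrt{x}$; and the smoothing kernel for the sharp bound has to be tuned carefully so that its Mellin transform interacts well with the density $N(T)$. Converting the resulting $\psi$-estimate into the stated gap bound is, by comparison, routine.
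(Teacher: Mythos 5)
The paper does not prove this statement at all: Theorem \ref{thm7} is quoted as background from Cram\'er's paper \cite{r5}, so there is no internal proof to compare your argument against. Your sketch is essentially Cram\'er's own route and it is sound: the explicit formula plus RH gives $\psi(x)=x+O(\sqrt{x}\log^2 x)$, hence a gap bound with $\log^2$, and replacing the sharp cutoff by a F\'ejer/Ces\`aro (tent) weight $w$ supported on $(x,x+2h)$ makes the Mellin transform satisfy $|\hat w(\rho)|\ll\min\bigl(h x^{-1/2},\,x^{3/2}h^{-1}|\rho|^{-2}\bigr)$, so that splitting the zero sum at $T\asymp x/h$ and using $N(T)\ll T\log T$ yields $\sum_\rho|\hat w(\rho)|\ll\sqrt{x}\log x$, which with $h\asymp\sqrt{x}\log x$ and the usual removal of proper prime powers gives the stated bound.

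One correction to your closing commentary: in this argument no cancellation among the zeros is used or needed. Both the $\log^2$ bound and the sharpened $\log$ bound are obtained by estimating the zero sum term by term in absolute value; the extra logarithmic saving comes entirely from the quadratic decay of the smoothing kernel's Mellin transform (absolute convergence of the tail) combined with the zero-counting function, not from oscillation of $x^{i\gamma}$. Also, when you discard prime powers you should count them in the short interval $(x,x+h]$ (there are $O(h/\sqrt{x}+1)$ squares there, contributing $O(\log^2 x)=o(h)$ to $\psi$), rather than counting all prime powers up to $x$. Neither point affects the validity of your plan.
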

However, since the Riemann Hypothesis is still open, the result is yet to be proven.

 In Section \ref{s1}, we iterate the recurrence relation for lucky numbers to find an exact formula for the counting function of lucky numbers. This highlights the relationship between the recurrence relation and the counting function for lucky numbers. Building on this, we prove an exact formula for the $n$th lucky number, $l_n$.
 
In Section \ref{fta}, we formulate an analogue of the fundamental theorem of arithmetic for lucky numbers.  We will prove this theorem and introduce some new definitions, terminology and notation. Such a theorem could have significant implications, as many results in number theory, prime number theory, and practical fields like encryption rely on the fundamental theorem of arithmetic. This theorem provides a potential foundation for a new area of number theory, from which numerous questions and open problems naturally follow. This opens the possibility of studying number theory in settings where lucky numbers serve as the “building blocks” of the integers instead of prime numbers. In the final part of this section, we provide a way of representing integers in terms of lucky primes and give a generalization of the lucky prime conjecture \cite{r7}.

In section \ref{s2}, we prove an analogue of Bertrand's postulate for lucky numbers by using similar ideas to those used in the proof of the lucky number theorem. Although the lucky number theorem implies an analogue of Bertrand's postulate for sufficiently large $x$, we directly prove it for all $x\geq4$.

In Section \ref{s3}, we present our result on gaps between consecutive lucky numbers. This result is an analogue of a slightly stronger result than the conjectured result in Theorem \ref{thm7}. The focus of this section is to give a detailed proof of a new bound on the order of magnitude of the difference between consecutive lucky numbers.

\subsection*{Notation}
Throughout this report, $l$ is used to represent a lucky number. Similarly $p$ will be used to represent a prime. Define $l_n$ as the $n$th lucky number. Note that 1 is the first lucky number since it is the smallest number remaining after the sifting process is completed. To simplify calculations, we define $l_1=2$. Let $p_n$ be the $n$th prime number. Also note that the function $\log$ will be used as the natural logarithm. In addition, the symbol $\sim$ will be used to indicate asymptotic equality as the parameter of interest increases without bound.
\begin{defn}
    Define the function $L :[1,\infty)\rightarrow \mathbb{N}$ as 
    $$L(x) := \vert \{l\,|\, l\leq x, \text{ and }l \text{ is a lucky number}  \} \vert.$$ 
    So $L(x)$ is the number of lucky numbers less than or equal to $x$. Thus, $L(x)$ will be referred to as the counting function of lucky numbers.
\end{defn}

\section{Formulas for Lucky Numbers}\label{s1}
In the proof of the lucky number theorem, Hawkins and Briggs \cite{r2} obtained the following recurrence relation,
\begin{equation}
\label{q2}
    R(n,x)=R(n-1,x)-\floor*{\frac{R(n-1,x)}{l_{n-1}}}, \text{ for } n\geq 2,
\end{equation}
where $$R(n,x) =\vert\{k\,|\,k\in A_{n-1}, k\leq x\}\vert.$$
Iterating this recurrence relation, we get a formula for $L(x)$.
\begin{prop}\label{prop1}
    Take any $x\geq1$. Note that $l_{L(L(x))}$ is the largest lucky number less than or equal to $L(x)$. Then the following formula holds for $L(x)$.
    \begin{equation}
        L(x)=\ceil*{\dotsi \ceil*{\ceil*{\ceil*{\floor{x}\hk*{1-\frac{1}{l_1}}}\left(1-\frac{1}{l_2}\right)}\left(1-\frac{1}{l_3}\right)}\dotsi \left(1-\frac{1}{l_{L(L(x))}}\right)}.
    \end{equation}
\end{prop}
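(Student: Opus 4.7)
The plan is to iterate the recurrence \eqref{q2} directly by rewriting each floor subtraction as a single ceiling operation, and then to pinpoint the step at which the iterated count $R(n,x)$ first equals $L(x)$. The whole argument rests on one elementary identity: for a positive integer $m$ and integer $k\geq 2$,
$$m - \floor*{\tfrac{m}{k}} = \ceil*{m\left(1-\tfrac{1}{k}\right)}.$$
Writing $m=qk+r$ with $0\leq r<k$, the left side equals $(k-1)q+r$, while $m(1-1/k)=(k-1)q+r(k-1)/k$ differs from $(k-1)q+r$ by $-r/k\in(-1,0]$, so taking the ceiling recovers $(k-1)q+r$ in either case.

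Substituting this identity into \eqref{q2} gives the ceiling-form recurrence
$$R(n,x) = \ceil*{R(n-1,x)\left(1-\tfrac{1}{l_{n-1}}\right)}, \qquad n\geq 2,$$
with base value $R(1,x) = |A_0\cap[1,x]| = \floor{x}$ since $A_0=\mathbb{N}$. A straightforward induction on $n$ then unfolds this relation into the nested ceiling expression of the proposition, with factors $(1-1/l_i)$ for $i=1,\dots,n-1$.

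The remaining work is to justify the choice of stopping index. Taking $n=L(L(x))+1$ places the outermost factor at $(1-1/l_{L(L(x))})$, matching the statement. To confirm $R(L(L(x))+1,x)=L(x)$, I would argue that at this step the iteration has already stabilised, i.e.\ $A_{L(L(x))}\cap[1,x]=A_\infty\cap[1,x]$. The sift performed at step $m$ deletes the elements of $A_{m-1}$ in positions $l_m,2l_m,\dots$; none of these lies in $[1,x]$ precisely when $R(m,x)<l_m$. For every $m>L(L(x))$ we have $l_m>L(x)$ by the very definition of $L(L(x))$, so once the iterate has descended to $L(x)$ no further sift can disturb the count on $[1,x]$, and the recursion becomes stationary at $L(x)$ as required.

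The main obstacle is this stabilisation step, since a priori a non-lucky $y\leq x$ might survive all sifts up to step $L(L(x))$ and only be eliminated later. I would close this by a bootstrap: show first that $R(L(L(x))+1,x)$ cannot strictly exceed $L(x)$ — because any surplus element would have to occupy a position of the form $k\,l_m$ in some $A_{m-1}$ with $m\leq L(L(x))$, contradicting its survival — and then propagate this bound through all later sifts using $l_m>L(x)\geq R(m,x)$. Combined with the trivial lower bound $R(n,x)\geq L(x)$ coming from $A_\infty\subseteq A_{n-1}$, this pins down $R(L(L(x))+1,x)=L(x)$, completing the identification of the nested ceiling expression with $L(x)$.
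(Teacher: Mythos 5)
Your recurrence, the identity $m-\floor*{m/k}=\ceil*{m(1-1/k)}$, and the inductive unfolding all match the paper's own argument, and you have correctly isolated the one non-trivial point: whether the iteration has already stabilised at the value $L(x)$ after the sift by $l_{L(L(x))}$. But your proposed bootstrap for that step is circular. You claim a non-lucky $y\leq x$ surviving in $A_{L(L(x))}$ ``would have to occupy a position of the form $k\,l_m$ in some $A_{m-1}$ with $m\leq L(L(x))$, contradicting its survival.'' That does not follow: such a $y$ is removed at its own later step $m>L(L(x))$, where it occupies position $k\,l_m$ in $A_{m-1}$ for \emph{that} $m$, which is perfectly consistent with surviving the first $L(L(x))$ sifts. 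The inequality $l_m>L(x)$ for $m>L(L(x))$ only guarantees that step $m$ removes nothing \emph{once} the running count has descended to $L(x)$; it does not force the descent to have happened yet. The stabilisation criterion must compare $l_m$ with the current count $R(m,x)$, not with the final count $L(x)$, and since $R(m,x)\geq L(x)$ always, the condition you actually need is strictly stronger than the one you have.

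In fact the stabilisation claim, and hence the proposition as stated, fails. Take $x=20$: the lucky numbers up to $20$ are $1,3,7,9,13,15$, so $L(20)=6$ and $L(L(20))=2$, and the formula with the two factors $l_1=2$, $l_2=3$ gives $\ceil*{\ceil*{20\cdot\tfrac{1}{2}}\cdot\tfrac{2}{3}}=\ceil*{20/3}=7\neq 6$. The discrepancy is the number $19$, which survives into $A_2\cap[1,20]=\{1,3,7,9,13,15,19\}$ and is only removed by the sift with $l_3=7$, because $R(3,20)=7\geq l_3$ even though $l_3>L(20)$. (The paper's proof asserts ``$R(i,x)=L(x)$ for any $i\geq L(L(x))$'' at exactly this point without justification, so the gap is inherited from the source rather than introduced by you.) To make the statement true one must iterate until the first index $i$ with $R(i,x)<l_i$, which, as this example shows, can exceed the index prescribed in the proposition; with the stopping index as written, no argument can close the gap.
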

\begin{proof}
    First note that there are $L(L(x))$ lucky numbers less than or equal to $L(x)$, therefore it follows that $l_{L(L(x))}$ is the largest lucky number less than or equal to $L(x).$
      Consider the defined sequence $R(n,x)$, where $R(1,x)=\floor{x}$. Note that by definition, $R(i,x)\in \mathbb{N}_0$ for all $i$. In the first step of the sifting process, $\floor*{\frac{R(1,x)}{2}}$ numbers will be eliminated (this follows since every second integer will be eliminated). Hence, it follows that 
      $$R(2,x)=R(1,x)-\floor*{\frac{R(1,x)}{2}}=\ceil*{\frac{R(1,x)}{2}},$$
      since $R(i,x)$ is an integer and $R(i,x)-\floor*{\frac{R(i,x)}{2}}=R(i,x)+\ceil*{-\frac{R(i,x)}{2}}=\ceil*{R(i,x)-\frac{R(i,x)}{2}}=\ceil*{R(i,x)\left(1-\frac{1}{2}\right)}$ for all $i$.
      Similarly, in the second step of the sifting process $\floor*{\frac{R(1,x)}{l_2}}$ numbers will be eliminated. Hence,
      $$R(3,x)=R(2,x)-\floor*{\frac{R(2,x)}{l_2}}=\ceil*{R(2,x)\left(1-\frac{1}{l_2}\right)}.$$
      Repeat this process to get the result 
      \begin{equation}\label{eq2}
          R(i+1,x)=R(i,x)-\floor*{\frac{R(i,x)}{l_{i+1}}}=\ceil*{R(i,x)\left(1-\frac{1}{l_{i+1}}\right)},  \forall \, i.
      \end{equation}
(This result is similar to the recurrence relation obtained in equation (\ref{q2}) by \cite{r2}.)

Note that if $l_{i+1}>R(i,x)$ for some $R(i,x)$, then $$\ceil*{R(i,x)(1-1/l_{i+1})}=\ceil*{R(i,x)-R(i,x)/l_{i+1}}=R(i,x)$$ (since $R(i,x)\in \mathbb{N}_0$ and $\frac{R(i,x)}{l_{i+1}}<1$). Also note that this would imply that $R(i,x)=L(x)$ for any $i\geq {L(L(x))}$. Hence, from this, it is only necessary to use the lucky numbers in the sifting process up to the largest $l_{{L(L(x))}}$ such that $l_{L(L(x))}\leq L(x)$. Intuitively, this implies that if there are fewer than $l_{{L(L(x))}+1}$ numbers remaining less than or equal to $x$, then there is no $l_{{L(L(x))}+1}$th number that can be eliminated. Hence, it follows that 
\begin{equation} \label{eq3.3}
    L(x)=R({L(L(x))},x).
\end{equation}
Using the recurrence relation obtained in equation (\ref{eq2}), 
\begin{align*}
    L(x) & =R({L(L(x))},x)=\ceil*{R({L(L(x))}-1,x)\left(1-\frac{1}{l_{{L(L(x))}}}\right)}\\
    & =\ceil*{\ceil*{R({L(L(x))}-2,x)\left(1-\frac{1}{l_{{L(L(x))}-1}}\right)}\left(1-\frac{1}{l_{{L(L(x))}}}\right)}\\
    & \;\;\vdots\\
    &=\ceil*{\dotsi \ceil*{\ceil*{\ceil*{R(1,x)\hk*{1-\frac{1}{l_1}}}\left(1-\frac{1}{l_2}\right)}\left(1-\frac{1}{l_3}\right)}\dotsi \left(1-\frac{1}{l_{L(L(x))}}\right)}\\
    &=\ceil*{\dotsi \ceil*{\ceil*{\ceil*{\floor{x}\hk*{1-\frac{1}{l_1}}}\left(1-\frac{1}{l_2}\right)}\left(1-\frac{1}{l_3}\right)}\dotsi \left(1-\frac{1}{l_{L(L(x))}}\right)}.
\end{align*}
\end{proof}
\begin{example}
    For example, when this formula is applied to $x=10$,we have, $\ceil*{10/2}=5$, $\ceil*{5(2/3)}=4$, and then the next lucky number is 7, but $7>4$, so the process ends. It follows that $L(10)=4$, and the corresponding largest lucky number less than 4 is $l_{L(L(x))} = 3$.
\end{example}
 Also note that equation (\ref{eq3.3}) gives the relationship between $L(x)$ and $R(i,x)$.

The following proposition relates $l_n$ to the lucky numbers less than or equal to $n$. This gives a useful method to calculate $l_n$ without iterating the entire lucky sieve.
\begin{prop}\label{prop2}
        Let $n\geq2$. Let $l_\beta$ be the largest lucky number less than or equal to $n$. Then 
    \begin{align}
        &l_n=\ceil*{\ceil*{\ceil*{ \dotsi \ceil*{\ceil*{n\left(\frac{l_\beta}{l_\beta -1}\right)-1}\left(\frac{l_{\beta-1}}{l_{\beta-1} -1}\right)-1} \dotsi }\left(\frac{l_{2}}{l_{2} -1}\right)-1}\hk*{\frac{l_1}{l_1-1}}-1}\label{eq3.10}\\
        &l_n=\floor*{\left(\floor*{\left(\floor*{ \dotsi  \floor*{\left( \floor*{(n l_\beta -1)/(l_\beta -1)}l_{\beta-1}-1\right)/(l_{\beta-1}-1)} \dotsi }l_2-1\right)/(l_2-1)}\right)l_1-1)/(l_1-1)}.\label{eq3.11}
    \end{align}
   
\end{prop}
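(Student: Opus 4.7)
The plan is to invert, step by step, the iterated ceiling formula of Proposition 1. Setting $x = l_n$ there, and using $L(l_n) = n$ together with $L(L(l_n)) = L(n) = \beta$, one sees that $n$ is obtained from $l_n = \floor*{l_n}$ by applying the ceiling recurrence once for each of the lucky numbers $l_1, l_2, \ldots, l_\beta$. To obtain formulas \eqref{eq3.10} and \eqref{eq3.11} I reverse the process: starting from $n$, apply the inverse of each ceiling step in the opposite order $l_\beta, l_{\beta-1}, \ldots, l_1$, and end with $l_n$.

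The key algebraic step is a single-step inversion lemma. Given positive integers $a, b$ and an integer $m \geq 2$ with $b = \ceil*{a(1 - 1/m)} = a - \floor*{a/m}$, suppose additionally that $m \nmid a$. Then
\[
a \;=\; \ceil*{\tfrac{bm}{m-1} - 1} \;=\; \floor*{\tfrac{bm - 1}{m-1}}.
\]
The proof is direct: write $a = mq + r$ with $1 \leq r \leq m-1$, so that $b = (m-1)q + r$, and verify both forms. For the ceiling form one splits $r \leq m - 2$ from $r = m - 1$; the floor form is immediate from $(bm-1)/(m-1) = mq + r + (r-1)/(m-1)$ with $(r-1)/(m-1) \in [0, 1)$. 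The hypothesis $m \nmid a$ is essential: if $b = (m-1)k$, both $a = mk$ and $a = mk - 1$ satisfy $b = a - \floor*{a/m}$, and the stated inverse selects only the latter.

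Supplying this hypothesis at every stage is the heart of the proof. I will show that $R(i, l_n) \not\equiv 0 \pmod{l_i}$ for each $i \in \{1, 2, \ldots, \beta\}$ appearing in the iteration. The argument uses the interpretation $R(i, l_n) = |A_{i-1} \cap [1, l_n]|$: because $l_n$ is a lucky number it lies in every $A_j$, and being the largest element of $[1, l_n]$ it occupies position $R(i, l_n)$ in $A_{i-1}$. The sift transforming $A_{i-1}$ into $A_i$ removes precisely those elements whose position is divisible by $l_i$; since $l_n$ survives this sift, $R(i, l_n) \not\equiv 0 \pmod{l_i}$. A range check $R(i, l_n) \geq n \geq l_\beta \geq l_i$ for $i \leq \beta$ confirms that the sift actually reaches position $R(i, l_n)$, so the constraint is genuine.

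Combining the two, iterating the inversion lemma $\beta$ times from the boundary value $n$, with $m = l_\beta, l_{\beta-1}, \ldots, l_1$ in succession, recovers $R(1, l_n) = \floor*{l_n} = l_n$ as the nested expression in \eqref{eq3.10}. Using the floor form of the inversion lemma at each step yields \eqref{eq3.11} by the same argument. The main obstacle is the non-divisibility step: the algebraic inversion is an elementary calculation, but without the lucky-sieve interpretation of $R(i, l_n)$ as the position of $l_n$, the one-step inverse of $b = \ceil*{a(1 - 1/m)}$ is two-valued whenever $b \equiv 0 \pmod{m-1}$, and the formulas would fail in this boundary case.
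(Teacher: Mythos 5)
Your proof is correct, and it resolves the central difficulty of the proposition by a genuinely different mechanism than the paper's. Both arguments share the same algebraic core: writing the intermediate value as $a = mq + r$ and checking that $\ceil*{bm/(m-1)-1} = \floor*{(bm-1)/(m-1)}$ inverts $b = a - \floor*{a/m}$, with the inverse being two-valued precisely when $r = 0$. The paper confronts the ambiguous case directly: it builds the iterated preimages $B_1, \dots, B_\beta$ of the ceiling steps, shows each single-step preimage is $\{y\}$ or $\{y-1, y\}$ according as $r \neq 0$ or $r = 0$, verifies that the formula always selects the minimum, and identifies $\min(B_\beta)$ with $\min(L^{-1}(n)) = l_n$. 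You instead run the forward chain of Proposition \ref{prop1} at $x = l_n$ and observe that the ambiguous case simply never occurs along this particular trajectory: $R(i, l_n)$ is the position of $l_n$ in $A_{i-1}$, and since the lucky number $l_n$ survives the sift by $l_i$, that position is never divisible by $l_i$; hence each forward step is uniquely invertible and the backward iteration recovers $R(\beta, l_n), \dots, R(1, l_n) = l_n$ by a clean induction. Your route buys a tighter and arguably more transparent argument — in particular it sidesteps the paper's implicit (and only lightly justified) claim that composing stagewise minima of the sets $B_i$ yields the global minimum of $L^{-1}(n)$ — at the cost of leaning on the combinatorial interpretation of $R(i,x)$ as a position in the sieve, which the paper's proof of this proposition does not invoke. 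The one point worth spelling out in a final write-up is the indexing of the forward chain (that Proposition \ref{prop1} applied to $x = l_n$ consists of exactly $\beta = L(n)$ steps using $l_1, \dots, l_\beta$ and terminates at $n$), so that the backward iteration has the right length; this is exactly what the displayed formula of Proposition \ref{prop1} provides.
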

Erdös and Jabotinsky \cite{r3} obtained a similar formula for sequences of integers defined by sieves. Note that, as defined in Proposition \ref{prop2}, we have $\beta=L(n)$.
\begin{proof}
    Note that the two formulas (\ref{eq3.10}) and (\ref{eq3.11}) are equivalent. This is a result of basic manipulations of the floor and ceiling functions and the formula  for $a,b \in \mathbb{N}, \floor*{\frac{a}{b}}=\ceil*{\frac{a+1}{b}}-1$ or equivalently $\floor*{\frac{a-1}{b}}=\ceil*{\frac{a}{b}}-1$. By applying this to each ceiling function in (\ref{eq3.10}), the formula in (\ref{eq3.11}) is obtained. Therefore, it suffices to prove only one of the formulas.
    
    Proof for (\ref{eq3.10}). First, define the preimage of the counting function 
    $$L^{-1}(n) =\{k\in \mathbb{N}\,|\,L(k)=n\}.$$ 
   Note that
    $$L(l_n)=L(l_n+1)=\cdots=L(l_{n+1}-1)=n.$$
    Therefore, it follows that $\min{(L^{-1}(n))}=l_n$. By using this fact, it is required to show that the formula in (\ref{eq3.10}) maps $n$ to the minimum of the preimage $L^{-1}(n)$.  
    First we construct a finite sequence of sets. Let 
    \begin{align*}
        &B_1:=\{y\in \mathbb{N}\,|\, \ceil*{y\hk*{1-\frac{1}{l_{\beta}}}}=n \}\\
        &B_2:=\{y\in \mathbb{N}\,|\, \ceil*{y\hk*{1-\frac{1}{l_{\beta-1}}}}= \min(B_1) \}\\
        &B_3:=\{y\in \mathbb{N}\,|\, \ceil*{y\hk*{1-\frac{1}{l_{\beta-2}}}}= \min(B_2) \}\\
        &\;\;\vdots\\
        &B_{\beta}:=\{y\in \mathbb{N}\,|\, \ceil*{y\hk*{1-\frac{1}{l_{1}}}}= \min(B_{\beta-1}) \}.
    \end{align*}
    Noting that $L(l_n)=n$ and in Proposition \ref{prop1}, $l_{L(L(l_n))}$ is the largest lucky number less that or equal to $L(l_n)$, we have $l_\beta=l_{L(n)}=l_{L(L(l_n))}$. Therefore by construction, and Proposition \ref{prop1}, we have $L(B_{\beta})=n$ and 
    $$\min{(L^{-1}(n))}=\min{(B_{\beta})}=l_n.$$
    With this information, we only need to show that the first step of the formula in (\ref{eq3.10}), maps to the minimum value of $B_1$, i.e. we will show that $\ceil*{n\left(\frac{l_\beta}{l_\beta -1}\right)-1}=\min{(B_1)}$. Then we show that the second step maps to $\min{(B_2)}$, i.e. $\ceil*{\min{(B_1)}\left(\frac{l_\beta}{l_\beta -1}\right)-1}=\min{(B_2)}.$ In general, we show that $\ceil*{\min{(B_{i})}\left(\frac{l_\beta}{l_\beta -1}\right)-1}=\min{(B_{i+1})}$ for the $i$th step.

 Consider $y \in B_{\beta-i+1}$. This would be any $y\in\mathbb{N}$ such that $\ceil*{y\hk*{1-\frac{1}{l_i}}}=\min(B_{\beta-i})$. Note that based on the definition in Proposition \ref{prop1}, this would imply that $y\geq l_i$. It is possible to write $y=ql_i+r$ for some quotient $q$ and some remainder $0\leq r<l_i$. Use this to simplify the expression for the $i$th step to get 
    \begin{equation} \label{eq36}
        \ceil*{y\hk*{1-\frac{1}{l_i}}}=y-\floor*{\frac{y}{l_i}}=y-\floor*{\frac{ql_i+r}{l_i}}=y-q-\floor*{\frac{r}{l_i}}=y-q.
    \end{equation}
    We have
    $$\ceil*{y\hk*{1-\frac{1}{l_i}}} \neq \ceil*{(y+2)\hk*{1-\frac{1}{l_i}}}.$$
    Since
    $$\ceil*{(y+2)\hk*{1-\frac{1}{l_i}}}=y+2-\floor*{\frac{y+2}{l_i}}=y+2-\floor*{\frac{ql_i+r+2}{l_i}}=y+2-q-\floor*{\frac{r+2}{l_i}}>y-q,$$
    this is true since, $l_i\geq 2$ and $r<l_i$, and it follows that $r+2<2l_i \implies \frac{r+2}{l_i}<2$, hence $\floor*{\frac{r+2}{l_i}}\leq1$. Therefore, we can conclude that if $y\in B_{\beta-i+1}$, then $y+2\notin B_{\beta-i+1}$. Without loss of generality, this is equivalent to saying, if $y\in B_{\beta-i+1}$, then $y-2\notin B_{\beta-i+1}$. We can ask the question, if $y\in B_{\beta-i+1}$, then when is $y-1\in B_{\beta-i+1}$? This is relevant because it is needed to show that in the case where both $y, y-1\in B_{\beta-i+1}$, then the $i$th step of formula (\ref{eq3.10}) only maps to $y-1=\min{(B_{\beta-i+1})}$.

    Using a similar method that we used to simplify $\ceil*{(y+2)\hk*{1-\frac{1}{l_i}}}$, we can show that 
    $$\ceil*{(y-1)\hk*{1-\frac{1}{l_i}}}=y-1-q-\floor*{\frac{r-1}{l_i}}.$$
    From this, it is easy to see that $\ceil*{y\hk*{1-\frac{1}{l_i}}}=\ceil*{(y-1)\hk*{1-\frac{1}{l_i}}}$ only if $r=0$. Thus we get the following:
    $$
    \min{(B_{\beta-i+1})}=\begin{cases}
        y, &\text{if } r\neq 0;\\
        y-1, &\text{if } r=0.
    \end{cases}
    $$
   Consider the two cases.\\
    Case 1: 
    when $r\neq 0$. Then 
    \begin{align*}
        \ceil*{\ceil*{y\hk*{1-\frac{1}{l_i}}}\hk*{\frac{l_i}{l_i-1}}-1} = \ceil*{\hk*{\frac{(y-q)l_i}{l_i-1}}-1} =\ceil*{\hk*{\frac{(yl_i-ql_i}{l_i-1}}-1}
    \end{align*}
    which follows by substituting equation (\ref{eq36}).
    Using the fact that $y=ql_i+r, \therefore ql_i=y-r$,
    \begin{align*}
        &\ceil*{\hk*{\frac{(yl_i-ql_i}{l_i-1}}-1}=\ceil*{\hk*{\frac{(yl_i-y+r}{l_i-1}}-1}\\
        =&\ceil*{\hk*{\frac{(y(l_i-1)+r}{l_i-1}}-1}=y-1+\ceil*{\frac{r}{l_i-1}} =y=\min{(B_{\beta-i+1})}.
    \end{align*}
    (Since $r>0, \ceil*{\frac{r}{l_i-1}}=1$).\\
   Case 2: 
    when $r=0$. Then the preimage of the $i$th step is $\{y-1,y\}$. Hence, the smallest is $y-1$, so the formula in (\ref{eq3.10}) should map to $y-1$. We have 
    $$
    \ceil*{\ceil*{y\hk*{1-\frac{1}{l_i}}}\hk*{\frac{l_i}{l_i-1}}-1} = \ceil*{\hk*{\frac{(y-q)l_i}{l_i-1}}-1} =\ceil*{\hk*{\frac{(yl_i-ql_i}{l_i-1}}-1}.
    $$
    Since $r=0, y=ql_i$, we get
    $$
    \ceil*{\hk*{\frac{(yl_i-ql_i}{l_i-1}}-1}=\ceil*{\hk*{\frac{(yl_i-y}{l_i-1}}-1}=\ceil*{\hk*{\frac{(y(l_i-1)}{l_i-1}}-1}=y-1=\min{(B_{\beta-i+1})}.
    $$
    This concludes the proof.
\end{proof}
\begin{example}
    For example, using the formula in (\ref{eq3.10}), we can calculate the third lucky number, $\ceil*{3(3/2)-1}=\ceil*{3.5}=4$. Next, $\ceil*{4(2/1)-1}=7=l_3$. We can also use formula (\ref{eq3.11}) to find the 8th lucky number, e.g. $\floor*{(8\times7-1)/6)}=\floor*{9+1/6}=9$, $\floor*{(9\times3-1)/2)}=13$, finally $\floor*{(13\times2-1)/1)}=25=l_8.$
\end{example}

\section{The Fundamental Theorem of Arithmetic for Lucky Numbers} \label{fta}
In this section, we aim to develop an analogue of the fundamental theorem of arithmetic for lucky numbers and explore its applications. 

An intuitive approach to get this result follows from the fact that every number is either eliminated by a lucky number (or the number 2) or is a lucky number itself. Keeping this in mind, we can define a new binary operator to act as \lq\lq multiplication\rq\rq\,for lucky numbers. 
\begin{defn} \label{def4.1}
    Let $\mathbb{L}=(A_\infty \backslash\{1\}) \cup \{2\}=\{2,3,7,9,13,15,\cdots\}$. Define the binary operator $\ast  : \mathbb{L}\times\mathbb{N}\to \mathbb{N}$ so that $l_{i}*n$ is equal to the $n$th number eliminated by $l_i$ in the sifting process. Thus, $l_{i}*n$ is the $(l_in)$th number in the set $A_{i-1}$.
\end{defn}
\begin{example}
    For example, $2*3=6$ since 6 is the third number eliminated by 2 and $3*1=5$ since 5 is the first number eliminated by 3.
\end{example}
Note that the binary operator, $*$, is not commutative, for example, the third number eliminated by 7 cannot be equal to the seventh number eliminated by 3, since it has already been eliminated. Additionally, the binary operator, $*$, is also not associative, for example, $(2*2)*1$ is not defined, however $2*(2*1)=4$ falls well within the requirements of the definition.

This gives us enough information to obtain an analogue of the fundamental theorem of arithmetic by using the definition for $*$.
\begin{theorem}\label{thmA}
   Every positive integer $n\geq1$ can be uniquely written in the following way 
   $$n=l_{\alpha_1}*(l_{\alpha_2}*(l_{\alpha_3}*(l_{\alpha_4}*(\cdots*l_{\alpha_k})\cdots);$$
   where $l_{\alpha_i}\in\mathbb{L} $ for all $i<k$ and $l_{\alpha_k}\in A_{\infty}$.
\end{theorem}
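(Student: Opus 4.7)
The plan is to establish existence and uniqueness separately, both by strong induction on $n$, pivoting on the dichotomy between $n\in A_\infty$ and $n\notin A_\infty$.

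For existence, I would proceed as follows. If $n\in A_\infty$, the single-term representation with $k=1$ and $l_{\alpha_1}=n$ satisfies the requirement $l_{\alpha_k}\in A_\infty$ trivially. If $n\notin A_\infty$, then by construction of the sifting process there is a unique level $i$ with $n\in A_{i-1}\setminus A_i$, so $l_i$ (under the convention $l_1=2$, which places $l_i$ in $\mathbb{L}$) eliminates $n$. Definition~\ref{def4.1} then supplies a unique $m\in\mathbb{N}$ with $n=l_i*m$, namely the position of $n$ in the elimination order of $l_i$. Since $n$ is the $(l_i m)$-th element of $A_{i-1}\subseteq\mathbb{N}$, we have $n\geq l_i m\geq 2m$, giving $m\leq n/2<n$. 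The inductive hypothesis then yields a decomposition of $m$, and prepending $l_i*$ produces one for $n$. The base case $n=1$ is handled directly by $l_{\alpha_1}=1\in A_\infty$.

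For uniqueness, suppose $n$ admits a decomposition with outermost factor $l_{\alpha_1}$. If $n\in A_\infty$, no representation with $k\geq 2$ can exist: such a representation would give $n=l_{\alpha_1}*m$, implying that $l_{\alpha_1}$ eliminates $n$, which contradicts $n\in A_\infty$. Hence the representation is forced to be $k=1$ with $l_{\alpha_1}=n$. If $n\notin A_\infty$, then $l_{\alpha_1}$ must be the unique sieve level that eliminates $n$ (because each integer is removed from the sieve at most once), and the associated index $m$ with $n=l_{\alpha_1}*m$ is then determined by the definition of $*$. Strong induction applied to $m<n$ completes the uniqueness argument.

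The main obstacle, I anticipate, is less any individual deep step than careful bookkeeping around the asymmetric status of $1$ and $2$: $1\in A_\infty$ plays the role of the terminating lucky factor in many decompositions (for instance $2=l_1*1$), while $l_1=2\notin A_\infty$ is admitted as a valid outer factor only through its inclusion in $\mathbb{L}$. The key structural fact underlying both halves of the argument is that every integer not in $A_\infty$ is eliminated at exactly one sieve stage; this follows directly from the definition of the sets $A_i$ but must be invoked cleanly to pin down the outermost factor in the decomposition.
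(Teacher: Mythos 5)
Your proposal is correct and follows essentially the same route as the paper: induct on $n$, use the fact that every $n\notin A_\infty$ is eliminated at exactly one sieve stage to write $n=l*n_1$ uniquely, and use $l*n_1\geq l\,n_1\geq 2n_1$ to get $n_1<n$ so the induction closes. The only cosmetic difference is that you separate existence and uniqueness into two inductions where the paper handles both in one, and you spell out the $n=1$ base case and the role of the convention $l_1=2$ slightly more explicitly.
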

\begin{remark}
  Note that if $n=l_{\alpha_1}*(l_{\alpha_2}*(l_{\alpha_3}*(l_{\alpha_4}*(\cdots*l_{\alpha_k})\cdots)$, it is possible that $l_{\alpha_k}=1$. This is because it need not be the case that $l*1=l$, for example $3*1=5$. Clearly $l_{\alpha_i}\neq 1$ for all $i<k$ since then the expression would not be defined.   
\end{remark}
\begin{proof} 
    We will prove this theorem using induction. Clearly, all numbers in $ A_{\infty}$ can be written uniquely as themselves. Assume that all numbers less than $n$ can be written uniquely in the form as in the theorem. If $n\in  A_{\infty}$, then we are done. If not, then it is possible to write $n=l*n_1$ for some $l\in \mathbb{L}$ and $n_1\in \mathbb{N}$. This is because $n$ is not a lucky number, so $n$ must be eliminated by some lucky number in the sifting process. This representation is also unique because $n$ can only be eliminated once. Note that $l*n_1\geq l(n_1)$. This is because $l*n_1$ is the $n_1$th multiple of $l$ in $A_k$ for some $k$ and $A_k \subseteq A_0=\mathbb{N}$. We therefore have $n=l*n_1\geq l(n_1)$ and since $l\geq 2$, it follows that $n_1<n$. By the induction hypothesis, $n_1$ can be factored uniquely and therefore $n$ can be factored uniquely. The result follows by induction.
\end{proof} 

To write an integer $n$ in the form as in the fundamental theorem of arithmetic for lucky numbers, use the following process: if $n$ is in $ A_{\infty}$, it is already in the correct form. If not, then $n=l_{\alpha_1}*a_1$ for some $l_{\alpha_1} \in \mathbb{L}$ and some $a_1 \in \mathbb{N}$. Next, if $a_1\in  A_{\infty}$, the process ends, if not, then $a_1=l_{\alpha_2}*a_2$ or some $l_{\alpha_2} \in \mathbb{L}$ and some $a_2 \in \mathbb{N}$. Hence $n=l_{\alpha_1}*(l_{\alpha_2}*a_2)$. If $a_2\in  A_{\infty}$, the process ends, if not, continue this process until $a_k\in  A_{\infty}$ for some $k$.

Theorem \ref{thmA} makes it possible to define what a lucky divisor of an integer would be.
\begin{defn}
    We call $l\in  A_{\infty}\cup\{2\}$ a lucky divisor of $n$ with respect to $*$, if $n=l_{\alpha_1}*(l_{\alpha_2}*(l_{\alpha_3}*(l_{\alpha_4}*(\cdots*l_{\alpha_k})\cdots)$ and $l\in \{l_{\alpha_1},l_{\alpha_2},\dots,l_{\alpha_k}\}$. If $l$ is a lucky divisor of $n$, we denote this by $l \overset{*}{|} n$. 
\end{defn}
\subsubsection*{How to factor an integer into its lucky factors}
As a direct consequence of the proof of Proposition \ref{prop1}, the following algorithm can be used to factor any integer $n$. Check to see if $n \in  A_{\infty}$. If this is the case, we are done, if not, check to see if $2|n$. If this holds, then we know that $n=2*n_1$ for some $n_1=n/2$ less than $n$ and then restart the same algorithm to factor $n_1$. If not, calculate $R(2,n)=\ceil*{n\hk*{1-\frac{1}{l_1}}}$. If $l_2|R(2,n)$, we can write $n=l_2*n_1$ where $n_1=R(2,n)/l_2$ and restart the algorithm to factor $n_1$. If not, calculate $R(3,n)=\ceil*{R(2,n)\hk*{1-\frac{1}{l_2}}}$. If $l_3|R(3,n)$, we can write $n=l_3*n_1$ where $n_1=R(3,n)/l_3$. If not, continue this process up to $l_{L(L(x))}$.
\begin{example}
We can factor 22 using this algorithm. First, we get $22=2*11$. Next, we need to factor 11. We have $\ceil{11(1-1/2)}=6$ and $6=2(3)$, hence $11=3*2$. Finally, we have $22=2*(3*2)$. 
\end{example}

Using similar ideas, we can find a formula for $l_i*n$.
\begin{prop}
    For any $l_i \in \mathbb{L}$ and $n\in \mathbb{N}$ we have the following
    $$l_i*n=\ceil*{\ceil*{\ceil*{ \dotsi \ceil*{\ceil*{(l_in)\left(\frac{l_{i-1}}{l_{i-1} -1}\right)-1}\left(\frac{l_{i-2}}{l_{i-2} -1}\right)-1} \dotsi }\left(\frac{l_{2}}{l_{2} -1}\right)-1}\hk*{\frac{l_1}{l_1-1}}-1}$$
\end{prop}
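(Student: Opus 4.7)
The plan is to reduce this proposition to Proposition \ref{prop2} by reinterpreting $l_i*n$ as a minimum-preimage problem for the partial sifting process that has been carried out up through $l_{i-1}$.

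First I would translate the definition. By Definition \ref{def4.1}, $x := l_i * n$ is the $(l_i n)$-th element of $A_{i-1}$, so in particular $x \in A_{i-1}$. Since $R(i, y) = |\{k \in A_{i-1} : k \leq y\}|$ is non-decreasing in $y$ and jumps by exactly one at each element of $A_{i-1}$, this forces $R(i, x) = l_i n$, and because $x$ itself lies in $A_{i-1}$, the integer $x$ is in fact the minimum of $\{y \in \mathbb{N} : R(i, y) = l_i n\}$. This is the analogue of the identity $l_n = \min(L^{-1}(n))$ exploited in Proposition \ref{prop2}.

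From here I would invert the sifting recurrence $R(j+1, y) = \ceil*{R(j, y)(1 - 1/l_j)}$ from equation (\ref{eq2}) step by step, working from $R(i, x) = l_i n$ down to $R(1, x) = \floor{x} = x$. The key combinatorial fact, proved by the same $r = 0$ versus $r \neq 0$ case split on $y = q l_j + r$ that appears in the proof of Proposition \ref{prop2}, is that the minimum $m \in \mathbb{N}$ satisfying $\ceil*{m(1 - 1/l_j)} = m'$ is given by $m = \ceil*{m' \cdot l_j/(l_j - 1) - 1}$. Iterating this inversion for $j = i-1, i-2, \ldots, 1$ starting from the value $m' = l_i n$ yields exactly the right-hand side of the claimed formula.

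The main obstacle, as in Proposition \ref{prop2}, is verifying that the minimum is preserved at every inversion step; this is dispatched by the case analysis just cited and introduces no genuinely new ideas. A minor bookkeeping point is to check that each intermediate value remains at least $l_j$ at the moment one inverts the $j$-th step, so that the pre-image set on which we take a minimum is non-empty and the lemma applies; this is automatic because every intermediate value is bounded below by the starting value $l_i n \geq l_i > l_j$ for all $j \leq i-1$.
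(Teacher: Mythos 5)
Your proposal is correct and is precisely the argument the paper intends: the paper omits the proof, remarking only that it ``can be done in a similar way to the proof of Proposition~\ref{prop2},'' and your reduction --- identifying $l_i*n$ with $\min\{y : R(i,y)=l_in\}$ and then inverting the sifting recurrence step by step via the same $r=0$ / $r\neq 0$ case split --- is exactly that argument carried out. The bookkeeping observation that every intermediate value exceeds $l_j$ (since the iterates increase from $l_in\geq l_i>l_j$) correctly disposes of the only point where the reduction could fail.
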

The proof of this proposition can be done in a similar way to the proof of Proposition \ref{prop2}.

 Note that it is possible that the same lucky number appears twice or more in different stages of the expression, for example, $2*2=4$. This gives us another definition inspired by an existing one for primes.
\begin{defn}
    The number of times that a lucky number $l$
    appears in the expression of $n$ as given by Theorem \ref{thmA}, will be called the l-adic valuation od n or the l-adic order of $n$, and shall be denoted by ${\overset{*}{\nu}}_l(n)$. This can also be referred to as the highest \lq power \rq\,\,of $l$ in $n$.
\end{defn}

Theorem \ref{thmA} has many consequences. One of the consequences is that it is possible to define new arithmetical functions similar to the already existing arithmetical functions for primes. 

\begin{defn}
    Define the lucky $\overset{*}{\omega} $ and $\overset{*}{\Omega}$ functions, by
    \begin{align*}
        \overset{*}{\omega}(n)&:=\sum_{l\overset{*}{|}n}{1},\\
        \overset{*}{\Omega}(n)&:=\sum_{l \overset{*}{|}n}{{\overset{*}{\nu}}_l(n)}.
    \end{align*}
\end{defn}
The first function counts the number of distinct lucky divisors of $n$ and the second counts the lucky divisors with their order.
This is similar to the prime omega functions $\omega$ and $\Omega$. Recall that the prime omega functions are defined as
\begin{align*}
    \omega (n)&=\sum_{p|n}{1},\\
    \Omega (n)&=\sum_{p|n}{\nu_p(n)}.
\end{align*}
A natural question is therefore to ask how do these new functions relate to the original functions defined for primes and what are the average orders of these new functions? For the original prime omega functions, we know
$$\sum_{n\leq x}{\Omega(n)}\sim x\log\log x$$
and
$$\sum_{n\leq x}{{\omega(n)}^k}\sim x^k\log\log x, \,k\in \mathbb{N}.$$
\begin{conj}
  We have
$$
\sum_{n\leq x}{\overset{*}{\omega}(n)}\sim Ax\log \log x,
$$
$$
\sum_{n\leq x}{\overset{*}{\Omega}(n)} \sim Bx\log \log x,
$$
for some constants $A,B\in \mathbb{R}.$ 
If this is the case, is $A=B$?
\end{conj}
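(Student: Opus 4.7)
The plan is to adapt the classical argument for $\sum_{n\leq x}\Omega(n)\sim x\log\log x$ to the lucky setting. First, I would use Theorem \ref{thmA} to split the sum $S(x):=\sum_{n\leq x}\overset{*}{\Omega}(n)$ by leading lucky factor. Each $n\leq x$ with $n\notin A_\infty$ satisfies $\overset{*}{\Omega}(n)=1+\overset{*}{\Omega}(n_1)$, where $n_1$ is the quotient in the decomposition $n=l_{\alpha_1}*n_1$. For a fixed $l_i\in\mathbb{L}$, as $n$ ranges over the $E_i(x)$ integers $\leq x$ eliminated by $l_i$ at the first sieve step, the index $n_1$ traverses $\{1,\dots,E_i(x)\}$ bijectively, where by Proposition \ref{prop1} we have $E_i(x)\approx xP_i/l_i$ with $P_i:=\prod_{j<i}(1-1/l_j)$. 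This yields the clean recurrence
$$S(x)\;=\;x\;+\;\sum_{l_i\in\mathbb{L}}S(E_i(x)).$$

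Second, I would derive lucky analogues of Mertens' two theorems by Abel summation from the lucky number theorem $L(y)\sim y/\log y$: namely $\sum_{l\leq y,\,l\in\mathbb{L}}1/l\sim\log\log y$ and $\prod_{l\leq y,\,l\in\mathbb{L}}(1-1/l)\sim C/\log y$ for some explicit constant $C$. The product estimate gives $P_i\sim C/\log l_i$, while the telescoping identity $P_i-P_{i+1}=P_i/l_i$ yields the exact value $\sum_i P_i/l_i=1$; combining these gives $\sum_i(P_i/l_i)\log l_i\sim C\log\log x$, which will play the role that $\sum_{p\leq x}1/p$ plays in the prime analysis.

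Third, I would attack the recurrence with a growth ansatz $S(y)=By\log\log y$ and match orders. The key technical point is that, because of $\sum_i P_i/l_i=1$, the would-be leading term $By\log\log y$ on the right-hand side of the recurrence cancels against the left, so $B$ must be extracted from the subleading expansion $\log\log E_i(x)=\log\log x-O(\log l_i/\log x)$ together with the Mertens estimates above. For the companion statement on $\overset{*}{\omega}$, I would bound $\sum_{n\leq x}(\overset{*}{\Omega}(n)-\overset{*}{\omega}(n))$ by counting integers whose lucky factorization contains a repeated factor, in direct analogy with the prime identity $\sum_n(\Omega-\omega)(n)=\sum_{p^k\leq x,\,k\geq 2}\lfloor x/p^k\rfloor=O(x)$; if this difference is of smaller order than the main term, one recovers $A=B$ exactly as in the prime case.

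The main obstacle is the intrinsically recursive nature of $\overset{*}{|}$: unlike for primes, where $\sum\Omega(n)=\sum_{p^k\leq x}\lfloor x/p^k\rfloor$ is a single sum over prime powers governed by a multiplicative divisibility criterion, lucky divisibility is defined through the sieve and the recurrence above has no closed-form solution. The exact cancellation coming from $\sum_i P_i/l_i=1$ makes the analysis particularly delicate: a preliminary calculation with the alternative ansatz $S(y)=cy\log y$ produces $c\sim 1/(C\log\log y)$ rather than a genuine constant, which raises the possibility that the true order of growth is $x\log x/\log\log x$ instead of $x\log\log x$. If so, the conjecture would need to be reformulated with a different rate function before the coefficients $A,B$, and the question of whether $A=B$, can be meaningfully addressed. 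Either resolution requires uniform control of the ceiling/floor error terms inherited from Proposition \ref{prop1}, which is the central technical hurdle.
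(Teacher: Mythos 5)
This statement is posed as a conjecture in the paper; no proof is given, and the only rigorous result in its vicinity is the much weaker bound $\sum_{n\leq x}\overset{*}{\Omega}(n)=O(x\log x)$ obtained from the pointwise estimate $\overset{*}{\Omega}(n)=O(\log n)$. So there is no proof of the paper's to compare yours against, and your proposal is, as you yourself acknowledge, a plan rather than a proof. Your starting recurrence is nonetheless correct, and is in fact exactly the expansion recorded in the Remark of Section \ref{fta}: writing $P_i=\prod_{j<i}(1-1/l_j)$, every non-lucky $n\leq x$ is eliminated by exactly one $l_i$, the corresponding quotients sweep out an initial segment of length $E_i(x)\approx xP_i/l_i$, and your $S(x):=\sum_{n\leq x}\overset{*}{\Omega}(n)$ satisfies $S(x)=x+\sum_i S(E_i(x))$ with $\sum_i P_i/l_i=1$.

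The genuinely valuable part of your analysis is the observation that this mass-conservation identity kills the ansatz $S(y)=By\log\log y$, and you should take that observation to its conclusion: it is not merely an obstacle to your method, it is strong evidence that the conjecture is false as stated. Setting $u=\log x$ and $f(u)=S(e^u)/e^u$, your recurrence becomes the renewal equation $f(u)=1+\sum_i w_i f(u-d_i)$ with weights $w_i=P_i/l_i$ summing to $1$ and steps $d_i=\log(l_i/P_i)\approx \log l_i$; since $\sum_{j>i}w_j=P_{i+1}\sim C/\log l_i$ by the lucky Mertens estimate, the step distribution satisfies $\Pr[d>t]\sim C/t$ and has infinite mean, and the infinite-mean renewal theorem then gives $f(u)\asymp u/\log u$, i.e.\ $S(x)\asymp x\log x/\log\log x$ --- precisely the alternative rate you flag at the end. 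The structural reason for the discrepancy with primes is that the quotient $n_1$ in $n=l*n_1$ restarts the sieve at $l_1=2$, so a fixed small lucky number recurs with constant frequency throughout the factorization chain, whereas for primes the multiplicity $\nu_p(n)$ has bounded expectation $1/(p-1)$ and the local densities are summable to $\log\log x$. I would therefore recommend reformulating the conjecture with the rate $x\log x/\log\log x$ (and separately re-examining $\overset{*}{\omega}$, whose expected order under the same heuristic is smaller still, so that $A=B$ is doubtful under any common normalization) rather than attempting to force the prime-style $x\log\log x$ asymptotic through the recurrence; turning the renewal heuristic into a rigorous estimate, with uniform control of the ceiling errors from Proposition \ref{prop1}, would be a worthwhile result in its own right.
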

Using simple ideas, we easily obtain the following lemma.
\begin{lemma}
For $x\geq2$, we have
    $$\sum_{n\leq x}{\overset{*}{\Omega}(n)}=O(x\log{x}).$$
\end{lemma}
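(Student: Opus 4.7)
The plan is to bound $\overset{*}{\Omega}(n)$ pointwise by $O(\log n)$ and then sum. The key input is an inequality already embedded in the proof of Theorem \ref{thmA}: for any $l\in\mathbb{L}$ and $m\in\mathbb{N}$, one has $l\ast m \geq lm$, because $l\ast m$ is the $(lm)$th term of a subset of $\mathbb{N}$.

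First I would take the unique factorization of $n$ guaranteed by Theorem \ref{thmA},
$$n = l_{\alpha_1}\ast\bigl(l_{\alpha_2}\ast\bigl(\cdots \ast l_{\alpha_k}\bigr)\cdots\bigr),\qquad k=\overset{*}{\Omega}(n),$$
and iterate the inequality $l\ast m \geq lm$ from the innermost bracket outward. This yields
$$n \;\geq\; l_{\alpha_1}\,l_{\alpha_2}\cdots l_{\alpha_k}.$$
Since $l_{\alpha_1},\dots,l_{\alpha_{k-1}}\in\mathbb{L}$ are each at least $2$, and $l_{\alpha_k}\geq 1$, this gives $n\geq 2^{k-1}$, and hence
$$\overset{*}{\Omega}(n) \;=\; k \;\leq\; 1 + \log_2 n.$$

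Second, I would sum this estimate over $n\leq x$, which yields
$$\sum_{n\leq x}\overset{*}{\Omega}(n) \;\leq\; \sum_{n\leq x}(1+\log_2 n) \;=\; x + \frac{1}{\log 2}\sum_{n\leq x}\log n \;=\; O(x\log x),$$
using the elementary estimate $\sum_{n\leq x}\log n = O(x\log x)$. This gives the claimed bound.

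There is no real obstacle here: the only step that requires care is the iteration of $l\ast m \geq lm$ to conclude $n \geq \prod_i l_{\alpha_i}$, but this is immediate by induction on the depth $k$ of the expression and was already essentially observed when proving uniqueness in Theorem \ref{thmA}. The bound is almost certainly not sharp, as the conjectured $x\log\log x$ behaviour indicates, but proving the stated $O(x\log x)$ truly follows from the crude pointwise bound $\overset{*}{\Omega}(n)=O(\log n)$.
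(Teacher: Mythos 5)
Your proposal is correct and follows essentially the same route as the paper: both establish the pointwise bound $\overset{*}{\Omega}(n)=k\leq 1+\log_2 n$ from the inequality $l\ast m\geq lm$ (noted in the proof of Theorem \ref{thmA}), which gives $n\geq 2^{k-1}$, and then sum over $n\leq x$ using $\sum_{n\leq x}\log n=O(x\log x)$. Your explicit iteration to $n\geq\prod_i l_{\alpha_i}$ is a slightly cleaner way of phrasing the same step the paper carries out.
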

\begin{proof}
    First, we prove that $\overset{*}{\Omega}(n)\leq c\log{n}+1 \text{ for some constant } c$ and any integer $n\geq1$.
    For any integer $m\geq 2$, we have $2*m=2m\geq2*2$. Furthermore, for any $l_i\in \mathbb{L}$, we have $l_i*2\geq2l_i\geq2*2$. This was explained in the proof of Theorem \ref{thmA}. Combining these two facts, it follows that if $\overset{*}{\Omega}(n)=k$, then $n\geq \underbrace{2*2*2*\cdots*2*1}_{k\text{ terms}}=2^{k-1}$. Therefore, it follows that $\log_2{n}\geq\log_2{2^{k-1}}=k-1$. Substitute $\overset{*}{\Omega}(n)=k$ and the result follows. 
   Thus, it follows that
    \begin{align*}
        \sum_{n\leq x}{\overset{*}{\Omega}(n)}&\leq c\sum_{n\leq x}{1+\log{n}}\\
        &=c(\floor{x}+\log{\floor{x}!})
    \end{align*}
    Using the fact that $\log{\floor{x}!}=O(x\log{x})\text{, for } x\geq 2$ by \cite{r6}, the result follows.
\end{proof}

\begin{remark}
    We have the following potentially useful expansion
    \begin{align*}
        \sum_{n\leq x}{\overset{*}{\Omega}(n)}=&\smashoperator[r]{\sum_{n\leq \floor*{\frac{x}{l_1}}}}{(1+\overset{*}{\Omega}(n))}+\smashoperator[r]{\sum_{n\leq \floor*{\ceil*{x\hk*{\frac{l_1-1}{l_1}}}\frac{1}{l_2}}}}{(1+\overset{*}{\Omega}(n))}+\smashoperator[r]{\sum_{n\leq \floor*{\ceil*{\ceil*{x\hk*{\frac{l_1-1}{l_1}}}\frac{l2-1}{l_2}}\frac{1}{l_3}}}}{(1+\overset{*}{\Omega}(n))}\\
        &+\dots+\smashoperator[r]{\sum\limits_{\substack{n\leq \floor*{\dotsi \ceil*{\ceil*{\ceil*{x\hk*{\frac{l_1-1}{l_1}}}\left(\frac{l_2-1}{l_2}\right)}\left(\frac{l_3-1}{l_3}\right)}\dotsi \left(\frac{1}{l_{L(L(x))}}\right)}}}}{(1+\overset{*}{\Omega}(n))} \qquad\qquad +L(x).
    \end{align*}
    This result can easily be derived using the same logic as in the subsection on how to factor integers with respect to the defined binary operator. The first summation is for integers of the form $l_1*m$ less than $x$, the second is for integers of the form $l_2*m$ less than $x$ and so on. The only numbers that remain and are not of the form $l*m$ are the lucky numbers themselves, hence the term $L(x)$.
\end{remark}

The next objective is to extend the binary operator $*: \mathbb{L}\times\mathbb{N}\to \mathbb{N}$ to some larger set acting on the left of the operator. This will allow us to define what a divisor of an integer would be and lead to more possible arithmetical functions and questions.
\begin{defn}
    Let $B=\{n\in\mathbb{N} \,| \,1\overset{*}{\nmid}n\}$ and $C=\{n\in\mathbb{N} \,| \,n=l_{\alpha_1}*(l_{\alpha_2}*(\cdots*(2*1)\cdots)\}$. Let $S=B\cup C$. Let $a\in S$, $b \in \mathbb{N}$. Then by Theorem \ref{thmA} we can write $a=l_{\alpha_1}*(l_{\alpha_2}*(l_{\alpha_3}*(\cdots*l_{\alpha_{k}})\cdots)$, where $l_{\alpha_{k}}\neq 1$ if $a\in B$ or $a=l_{\alpha_1}*(l_{\alpha_2}*(l_{\alpha_3}*(\cdots*2)\cdots)$ if $a\in C$ and then let $l_{\alpha_{k}}=2$. Therefore, $l_{\alpha_{k}} \in \mathbb{L}$. Define the extension $\circ :S\times \mathbb{N}\to \mathbb{N}$ as follows, 
    $$a\circ b:=l_{\alpha_1}*(l_{\alpha_2}*(l_{\alpha_3}*(l_{\alpha_4}*(\cdots*(l_{\alpha_{k}}*b)\cdots).$$
\end{defn}
\begin{remark}
    This extension is associative, that is, for $a,b\in S$ and $c\in \mathbb{N}$ we have $a\circ (b\circ c)=(a\circ b)\circ c$.
\end{remark}

    To see why the extension is associative, we can write $a=l_{\alpha_1}*(l_{\alpha_2}*(l_{\alpha_3}*(\cdots*l_{\alpha_{k_1}})\cdots)$, $b=l_{\beta_1}*(l_{\beta_2}*(l_{\beta_3}*(\cdots*l_{\beta_{k_2}})\cdots)$. Then, by definition, $a\circ (b\circ c)=a\circ (l_{\beta_1}*(l_{\beta_2}*(\cdots*(l_{\beta_{k_2}}*c)\cdots))=l_{\alpha_1}*(l_{\alpha_2}*(\cdots*(l_{\alpha_{k_1}}*(l_{\beta_1}*(l_{\beta_2}*(\cdots*(l_{\beta_{k_2}}*c)\cdots)\cdots)$. We consider $(a\circ b)\circ c=(l_{\alpha_1}*(l_{\alpha_2}*(\cdots*(l_{\alpha_{k_1}}*(l_{\beta_1}*(l_{\beta_2}*(\cdots*(l_{\beta_{k_2}})\cdots)\cdots))\circ c=l_{\alpha_1}*(l_{\alpha_2}*(\cdots*(l_{\alpha_{k_1}}*(l_{\beta_1}*(l_{\beta_2}*(\cdots*(l_{\beta_{k_2}}*c)\cdots)\cdots)=a\circ (b\circ c)$ using the definition.
    
\begin{remark}
    This extension does not affect the unique factorization in Theorem \ref{thmA}. Since the extension is associative, it is possible to write 
$$n=l_{\alpha_1}*(l_{\alpha_2}*(l_{\alpha_3}*(l_{\alpha_4}*(\cdots*l_{\alpha_{k}})\cdots)=l_{\alpha_1}\circ l_{\alpha_2}\circ l_{\alpha_3}\circ l_{\alpha_4}\circ \cdots \circ l_{\alpha_{k}}.$$
Also note that for example in the case of $2\circ n$, it would be undefined to write $2\circ1\circ n$ since $1\circ n$ is not defined. 
\end{remark}

Extending the operator to $\mathbb{N}\times\mathbb{N}$ is not possible if we want the uniqueness in Theorem \ref{thmA} to hold. This is because there is no clear way to define what $1\circ n$ should be. For example, if we decide to choose $1\circ n=n$, then $5\circ n=3\circ 1\circ n=3\circ n$ which changes the uniqueness in Theorem \ref{thmA}.

With this extension, we can introduce even more definitions.
\begin{defn}
    Let $d,n\in \mathbb{N}$. We say that $d$ is a divisor of $n$ with respect to the binary operator $\circ $, denoted by $d\overset{\circ }{|}n$, if $n=l_{\alpha_1}\circ l_{\alpha_2}\circ l_{\alpha_3}\circ l_{\alpha_4}\circ \cdots\circ l_{\alpha_{k_1}}$, $d=l_{\beta_1}\circ l_{\beta_2}\circ l_{\beta_3}\circ l_{\beta_4}\circ \cdots\circ l_{\beta_{k_2}}$ and the tuple $(l_{\beta_1},l_{\beta_2},\dots,l_{\beta_{k_2}})$ is contained in the tuple $(l_{\alpha_1},l_{\alpha_2},\dots,l_{\alpha_{k_1}})$.
\end{defn}
\begin{example}
    We have $10=2*(3*1)=2\circ3\circ1$, then the divisors of 10 are $10=2\circ3\circ1,\, 6=2\circ3,\,5=3\circ1,\,2,\,3,\,1$.
\end{example}
Since we defined what the divisors of an integer are, we can define the lucky divisor functions similar to the divisor functions for primes.
\begin{defn}
    Let $n\in \mathbb{N}$. We define the $i$th power lucky divisor function as follows
    $$
    \overset{\circ }{\sigma}_i(n):=\sum_{d\overset{\circ }{|}n}{d^i}.
    $$
    For $i=0$, this can also be denoted as $\overset{\circ }{d}(n)$ and would give the number of divisors of $n$ with respect to $\circ $.
\end{defn}
\begin{remark}
    We can change the notation for the lucky omega function to be consistent with this extension i.e. 
\begin{align*}
    \overset{*}{\omega}(n)=\overset{\circ}{\omega}(n) & \text{ and } \overset{*}{\Omega}(n)=\overset{\circ}{\Omega}(n).
\end{align*}

\end{remark}
Using simple techniques, we can deduce the following lemma.
\begin{lemma}
    For any integer $n\in \mathbb{N}$, we have the following bounds:
    $$
    \frac{(\overset{\circ }{\omega}(n))^2+\overset{\circ }{\omega}(n)}{2}\leq \, \overset{\circ }{d}(n)\leq \frac{(\overset{\circ }{\Omega}(n))^2-\overset{\circ }{\Omega}(n)}{2}+\overset{\circ }{\omega}(n).
    $$
    With equality occurring if $\overset{\circ }{\omega}(n)=\overset{\circ }{\Omega}(n)$.
\end{lemma}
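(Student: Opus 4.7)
The plan is to read off both bounds from a combinatorial analysis of the contiguous sub-tuples of the canonical FTA factorisation of $n$. By Theorem~\ref{thmA}, write $n = l_{\alpha_1}\circ l_{\alpha_2}\circ \cdots \circ l_{\alpha_\Omega}$ with $\Omega := \overset{\circ}{\Omega}(n)$ entries and $\omega := \overset{\circ}{\omega}(n)$ of them distinct. The definition of $d\overset{\circ }{|}n$ says precisely that $d$'s own FTA tuple is contained in $(l_{\alpha_1},\ldots,l_{\alpha_\Omega})$, so $\overset{\circ }{d}(n)$ is exactly the number of distinct contiguous sub-tuples of this tuple, and both bounds will follow from simple counting arguments on such sub-tuples.

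For the upper bound, I split the sub-tuples by length. The $\Omega$ length-one sub-tuples (one per position) contribute at most $\omega$ distinct tuples, since only $\omega$ distinct entries appear. For each $m$ with $2 \le m \le \Omega$ there are $\Omega-m+1$ contiguous sub-tuples of length $m$, giving in total $\sum_{m=2}^{\Omega}(\Omega-m+1) = \binom{\Omega}{2} = \tfrac{\Omega(\Omega-1)}{2}$ sub-tuples of length at least two. Adding the two contributions yields $\overset{\circ }{d}(n) \le \omega + \tfrac{\Omega(\Omega-1)}{2}$, which is the right-hand side of the inequality.

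For the lower bound, I exhibit $\tfrac{\omega(\omega+1)}{2}$ pairwise distinct sub-tuples explicitly. Let $c_1,\ldots,c_\omega$ denote the $\omega$ distinct values appearing in the FTA, indexed so that their \emph{leftmost} occurrences $f_1 < f_2 < \cdots < f_\omega$ are strictly increasing. For every pair $(i,j)$ with $1 \le i \le j \le \omega$, let $S_{i,j}$ be the contiguous sub-tuple running from position $f_i$ to position $f_j$. Its first entry is $c_i$ and its last is $c_j$, so $S_{i_1,j_1} = S_{i_2,j_2}$ would force $c_{i_1}=c_{i_2}$ and $c_{j_1}=c_{j_2}$, i.e.\ $(i_1,j_1)=(i_2,j_2)$. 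Hence the $\binom{\omega+1}{2} = \tfrac{\omega(\omega+1)}{2}$ sub-tuples $S_{i,j}$ are all distinct, which proves the lower bound.

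For the equality statement, when $\overset{\circ }{\omega}(n)=\overset{\circ }{\Omega}(n)$ every entry of the FTA is already distinct, so all $\binom{\Omega+1}{2} = \tfrac{\Omega(\Omega+1)}{2}$ contiguous sub-tuples are automatically pairwise distinct; the two bounds then collapse to the common value $\tfrac{\omega(\omega+1)}{2}$, giving equality throughout. The only non-routine step is the lower-bound construction: choosing the \emph{leftmost} occurrence $f_i$ of each distinct value is what pins down the first and last entries of $S_{i,j}$, and this is what makes the injection $(i,j) \mapsto S_{i,j}$ into the set of distinct sub-tuples work cleanly.
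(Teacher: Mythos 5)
Your proof is correct and follows essentially the same route as the paper: the upper bound by counting contiguous sub-tuples of each length $m\ge 2$ together with at most $\overset{\circ}{\omega}(n)$ distinct singletons, and the lower bound via the injection sending each pair of distinct values to the contiguous stretch between their leftmost occurrences. Your justification of injectivity (the first and last entries of $S_{i,j}$ pin down $(i,j)$) is in fact more explicit than the paper's, which merely asserts that the corresponding map $t\mapsto r$ is injective.
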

\begin{proof}
Let $n=l_{\alpha_1}\circ (l_{\alpha_2}\circ (l_{\alpha_3}\circ (\cdots\circ l_{\alpha_{k}})\cdots)$. Then $ \overset{\circ }{\Omega}(n)=k$, and $n$ has the following corresponding $k-$tuple $(l_{\alpha_1},l_{\alpha_2},\dots,l_{\alpha_{k}})$. We will denote this tuple by $u(n)$. The tuple, $u(n)$, contains at most $\overset{\circ }{\omega}(n)$ distinct 1-tuples. Next, $u(n)$ contains at most $k-1$ distinct 2-tuples, at most $k-2$ distinct 3-tuples, $\dots$, at most $k-i+1$ distinct $i-$tuples and $1$ $k-$tuple. Summing the first $k-1$ natural numbers and adding $\overset{\circ }{\omega}(n)$ will give the upper bound as required.

For the lower bound, construct a new tuple of distinct elements (lucky numbers) using the following algorithm. Take the first element in the $k-$tuple $u(n)=(l_{\alpha_1},l_{\alpha_2},\dots,l_{\alpha_{k}})$ and make it the first element ($l_{\alpha_1}$) in a new tuple say $w(n)$. Next, if the second element in $u(n)$ is already in $w(n)$, we move to the third element in $u(n)$, if not, then we add the second element of $u(n)$ to be the second element of $w(n)$. We continue this process and only add an element from $u(n)$ to $w(n)$ if it does not already appear in $w(n)$. The final resulting tuple $w(n)$ will be a $\overset{\circ}{\omega}{(n)}$-tuple and contain all the distinct lucky factors of $n$. We can write $w(n)=(l_{\alpha_{i_1}},l_{\alpha_{i_2}},\dots,l_{\alpha_{i_\kappa}})$ where $\kappa=\overset{\circ}{\omega}{(n)}$. Then $u(n)$ can also be rewritten as $u(n)=(l_{\alpha_{i_1}},l_{\alpha_{i_1+1}},\dots,l_{\alpha_{i_2}},l_{\alpha_{i_2+1}},\dots,l_{\alpha_{i_\kappa}},l_{\alpha_{i_\kappa+1}},\dots)$. Then for any tuple $t$ contained in $w(n)$ we can define an injective map from $t\mapsto r$, where $r$ is contained in $u(n)$. Let $t=(l_{\alpha_{i_j}},l_{\alpha_{i_{j+1}}},\dots,l_{\alpha_{i_{j+s}}})$ for some $s\geq 0$. Then we have $r=(l_{\alpha_{i_j}},l_{\alpha_{i_j+1}},\dots,l_{\alpha_{i_{j+1}}},l_{\alpha_{i_{j+1}+1}}\dots,l_{\alpha_{i_{j+s}}})$ is contained in $u(n)$ and the mapping $t \mapsto r$ is injective. Therefore, $w(n)$ contains fewer tuples than $u(n)$. It is easy to see that $w(n)$ contains one $\kappa$-tuples, two $(\kappa-1)$-tuples, three $(\kappa-2)$-tuples etc. Therefore we have that $w(n)$ contains $\sum\limits_{i=1}^{\overset{\circ }{\omega}(n)}{i}=\frac{(\overset{\circ }{\omega}(n))^2+\overset{\circ }{\omega}(n)}{2}$ tuples. This concludes the proof.

\end{proof}

\begin{defn}
    We call two integers $n$ and $m$ relatively lucky if there is no integer $d\in \mathbb{N}$ such that $d\overset{\circ }{|}n$ and $d\overset{\circ }{|}m$.
\end{defn} 

Recall that Euler's totient function $\phi(n)$ is the number of positive integers less than or equal to $n$ that are relatively prime to $n$. It is possible to define a similar function for lucky numbers.
\begin{defn}
    Define the lucky totient function $\overset{\circ }{\phi}(n)$ as the number of positive integers less than $n$ that are relatively lucky to $n$.
\end{defn}
Once again, the question arises, how does $\overset{\circ }{\phi}(n)$ relate to $\phi(n)$ and what is the average order of $\overset{\circ }{\phi}(n)$?

\subsection{On the Significance of Lucky Primes}
Lucky primes are prime numbers that are also lucky numbers. The first few lucky primes are $\{3,7,13,31,37,43,\dots\}$. It has been conjectured that there are infinitely many lucky primes \cite{r7}. We will show that every integer can be expressed (but not necessarily uniquely) as the \lq product\rq \, of combinations of powers of lucky primes (and 1) by using two binary operators, one being regular multiplication, the other will be the operator $*$. This result shows how the sequence of lucky primes is an even more special sequence of integers than the original prime numbers or lucky numbers on their own. Additionally, we find a generalization of the lucky prime conjecture which might provide a potential new approach to proving the infinitude of lucky primes.

\subsubsection*{Procedure to Factor an Integer in terms of Lucky Primes}
Start with any integer $n>1$. By the fundamental theorem of arithmetic, we can express $n$ as the product of powers of primes, say $n=p_{i_1}^{a_{i_1}}p_{i_2}^{a_{i_2}}\dots p_{i_{K_1}}^{a_{i_{K_1}}}$. Now each of the primes $p_{i_1}^{a_{i_1}},p_{i_2}^{a_{i_2}},\dots ,p_{i_{K_1}}^{a_{i_{K_1}}}$ can be written as the \lq product\rq \, of lucky numbers by using Theorem \ref{thmA}. For example, we can write $p_{i_1}=l_{\alpha_1}*(l_{\alpha_2}*(\cdots*l_{\alpha_{k_1}})\cdots)$. Then, we have $n=(l_{\alpha_1}*(l_{\alpha_2}*(\cdots*l_{\alpha_{k_1}})\cdots))^{a_{i_1}}p_{i_2}^{a_{i_2}}\dots p_{i_{K_1}}^{a_{i_{K_1}}}$. Each of the lucky numbers $l_{\alpha_{1}},l_{\alpha_{2}},\dots,l_{\alpha_{k_1}}$ can be written as the product of powers of primes. For example, we can write $l_{\alpha_{1}}=p_{j_1}^{a_{j_1}},p_{j_2}^{a_{j_2}},\dots,p_{j_{K_2}}^{a_{j_{K_2}}}$. Then, $n$ can be written as $n=((p_{j_1}^{a_{j_1}},p_{j_2}^{a_{j_2}},\dots,p_{j_{K_2}}^{a_{j_{K_2}}})*(l_{\alpha_2}*(\cdots*l_{\alpha_{k_1}})\cdots))^{a_{i_1}}p_{i_2}^{a_{i_2}}\dots p_{i_{K_1}}^{a_{i_{K_1}}}$. Note that the process will only terminate if we reach a lucky prime, since all prime numbers are written as themselves in the fundamental theorem of arithmetic and all lucky numbers are written as themselves in Theorem \ref{thmA}. Therefore, if we repeat this process for all the primes and lucky numbers when factoring $n$, we obtain a representation of $n$ in terms of only lucky primes (and possibly the number 1 since $1\in A_\infty$). 

This representation is not necessarily unique. This is because we can do the same factoring process, but instead start by first factoring $n$ in terms of lucky numbers. Furthermore, if we factor $n$ in any way in terms of its divisors (not necessarily prime divisors), and then start the algorithm by factoring each of those divisors in terms of lucky numbers, we may obtain a different representation. 
\begin{example}
    Let $n=77$. Then, $77=7(11)$, 7 is a lucky prime and therefore cannot be factored any further. However, we can write $11=3*(2*1)$. Thus, 
    $$
    77=7(3*(2*1)).
    $$
    However, also note that we have $$77=3*13$$ and since both 3 and 13 are lucky primes, this cannot be factored any further. So, the representation is not unique.
\end{example}

This representation may potentially be useful to use an Euclid-type approach to prove the infinitude of lucky primes, that is, assume that there are only finitely many lucky primes. Then use the fact that every integer can then be expressed as the product of only finitely many lucky primes and try to obtain a contradiction by finding a number that cannot be expressed as the product of just these finitely many lucky primes. However, this approach has many complications and has thus far not been successful.

One idea might be to restrict the representation of an integer in terms of lucky primes to only allow the factoring as explained above, where you factor the primes in terms of lucky numbers and lucky numbers in terms of primes. Therefore factoring $n$ or a lucky number in terms of just any arbitrary divisors is not allowed, it is only allowed to factor it in terms of powers of primes. For example, if $n=210=2*105$, then 105 is a lucky number. Now, writing $105=3(35)$ and then factoring $35$ in terms of lucky numbers is now allowed since 35 is not a prime. Therefore, with this restriction, there are at most two ways to factor an integer in terms of lucky primes.

With this restriction, we define a new concept called the fractional order of a prime/lucky number.
\begin{defn}
    Lucky numbers that are prime numbers have a prime order of 1. Prime numbers that are lucky numbers have a lucky order of 1, i.e. lucky primes have order 1. Define lucky numbers that are the product of powers of lucky primes to have a prime order of a half, and similarly define prime numbers that are the \lq product \rq \, of lucky primes to have lucky order of a half.
    In general, we define a prime (respectively lucky) number to have a lucky (respectively prime) order of $\frac{1}{n+1}$ if the smallest fractional order of its lucky (respectively prime) factors is $1/n$. sWe denote this by $\text{ord}(p)$ ($\text{ord}(l)$). I.e., if $p$ is a prime, $l$ is a lucky number and $p=l_{\alpha_1}*(l_{\alpha_2}*(\cdots*l_{\alpha_k})\cdots)$, $l=p_1^{a_1}p_2^{a_2}\cdots p_K^{a_K}$, then
    \begin{align*}
        \text{ord}(p)&= 1/\hk*{\max{\{\text{ord}^{-1}(l_{\alpha_1}),\text{ord}^{-1}(l_{\alpha_2}),\dots,\text{ord}^{-1}(l_{\alpha_k})\}}+1},\\
        \text{ord}(l)&= 1/\hk*{\max{\{\text{ord}^{-1}(p_{1}),\text{ord}^{-1}(p_2),\dots,\text{ord}^{-1}(p_K)\}}+1}.
    \end{align*}
\end{defn}
\begin{example}
    $41=3*7$, therefore 41 has a lucky order of 1/2.
\end{example}
This now gives a new conjecture more general than the lucky prime conjecture.
\begin{conj}
    Let $n$ be any fixed positive integer. There are infinitely many prime numbers (or lucky numbers) of order $1/n$.
\end{conj}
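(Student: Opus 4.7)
The conjecture is an ambitious generalisation of the lucky prime conjecture, which is itself the $n=1$ case and is a long-standing open problem. Any proof proposal must therefore be conditional or heuristic. The plan is to proceed by induction on $n$, taking the infinitude of lucky primes as the base case: for $n=1$, primes (respectively, lucky numbers) of order $1$ are precisely the lucky primes, so this case is exactly the lucky prime conjecture and we would assume it.

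For the inductive step, suppose there exist infinitely many lucky numbers and primes of every order $1/k$ with $k<n$. To produce a prime of order $1/n$, I would start with a lucky number $l$ of order $1/(n-1)$ (available by the induction hypothesis, since the definition is symmetric between primes and lucky numbers) and look for primes of the form $p = l * k$. By the uniqueness guaranteed in Theorem \ref{thmA}, the lucky factors of such a $p$ are $l$ together with the lucky factors of $k$; so if every lucky factor of $k$ has order at most $1/(n-1)$ and $l$ realises this maximum, then $p$ has order exactly $1/n$. The construction for lucky numbers of order $1/n$ is analogous, starting with a prime of order $1/(n-1)$ and seeking lucky numbers whose prime factorisation is supported on primes of order at most $1/(n-1)$ with at least one factor attaining this order.

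The main obstacle is guaranteeing the existence of infinitely many primes in a set of the shape $\{l * k : k \text{ belongs to a prescribed family}\}$. This is essentially a Dirichlet-type statement in the lucky setting: $*$-orbits replace arithmetic progressions, but because $l * k$ is defined via the lucky sifting process and not via an additive congruence, the classical analytic machinery (Dirichlet characters, $L$-functions, zero-free regions) has no obvious counterpart. Even establishing that $\{l * k\}_{k\geq 1}$ has positive density and is compatible in a suitable sense with the set of primes — a necessary prerequisite — seems to require a nontrivial comparison between the sieve of Eratosthenes and the lucky sieve. The recursive nature of the order function compounds the difficulty, since to pin down the order of $p$ we must control arbitrarily deep alternating layers of prime and lucky factorisation.

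A complementary heuristic supports the conjecture. The prime number theorem and the lucky number theorem both yield density $\sim 1/\log x$, so under the usual independence heuristic the number of lucky primes up to $x$ should be $\sim x/(\log x)^2$; iterating this through the recursive definition of order suggests that primes of each order $1/n$ form a subset of the primes with positive proportional density, hence are infinite. Turning any such heuristic into a proof appears to require genuinely new tools for controlling the correlation between the two sieves, and this is precisely why the statement is offered as a conjecture rather than a theorem.
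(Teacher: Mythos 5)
This statement is offered in the paper as a conjecture: no proof is given there, and none is known. Your proposal correctly recognises this and does not claim to prove it, so there is no gap in the usual sense --- but it is worth being precise about what your outline does and does not deliver. Your induction reduces the case of order $1/n$ to (i) the lucky prime conjecture as the base case and (ii) a Dirichlet-type statement asserting infinitely many primes in sets of the form $\{l \ast k\}$ with $k$ ranging over a prescribed family, which is a far stronger and equally inaccessible hypothesis; so the conditional structure does not genuinely reduce the difficulty. Note also that your chain of implications runs opposite to the direction the paper hints at: the paper suggests that the case $n=2$ might be \emph{easier} to attack than the lucky prime conjecture itself, i.e.\ that higher-order cases could be approached directly, whereas your induction makes every case depend on first settling $n=1$. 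Finally, your closing heuristic that primes of each fixed order $1/n$ have positive proportional density among all primes is optimistic: the independence heuristic gives lucky primes a density of roughly $1/\log x$ \emph{within} the primes, which already tends to zero, and each additional layer of the recursive order definition should thin the set further; the honest heuristic conclusion is only that each order class is infinite, not that it has positive relative density. None of this invalidates your write-up as a discussion of the conjecture, but it should be presented as a reduction to two open problems plus a heuristic, not as a proof strategy with a realistic path to completion.
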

Note that for $n=1$, this is simply the lucky prime conjecture. However, solving this for, say $n=2$, might be easier than just proving the lucky prime conjecture.
\section{An Analogue of Bertrand's Postulate for Lucky Numbers} \label{s2}
We will prove the following theorem.
\begin{theorem}[Bertrand's Postulate for Lucky Numbers]\label{thmB}
    For any $x\geq 4$, there is always a lucky number in the interval $[x,2x]$.
\end{theorem}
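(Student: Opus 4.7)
My plan is to use the explicit formula in Proposition \ref{prop1} to bound the count $L(2x)-L(x-1)$ from below, show it exceeds $1$ for every $x$ past some effective threshold $X_0$, and dispatch the finite range $4 \le x \le X_0$ by direct enumeration of lucky numbers. The asymptotic part will follow from a Mertens-type lower bound on $\prod_{l\le y}(1-1/l)$, which is in the spirit of the Hawkins--Briggs proof of the lucky number theorem.

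First, introduce the shorthand $D_i(x) := R(i,2x) - R(i,x-1)$, where $R$ is the sifting counting function from equation (\ref{q2}). Then $D_1(x) = \lfloor 2x\rfloor - \lfloor x-1\rfloor \ge \lfloor x\rfloor + 1$, and by (\ref{eq3.3}) we have $D_N(x) = L(2x) - L(x-1)$ with $N := L(L(2x))$. Combining the recursion (\ref{eq2}) with the elementary inequality $\lceil \alpha\rceil - \lceil \beta\rceil > \alpha - \beta - 1$ gives
$$D_{i+1}(x) \;>\; D_i(x)\left(1 - \tfrac{1}{l_{i+1}}\right) - 1,$$
and iterating this telescopes to
$$L(2x) - L(x-1) \;>\; (\lfloor x\rfloor + 1)\prod_{i=1}^{N}\left(1-\tfrac{1}{l_i}\right) \;-\; \sum_{j=1}^{N}\prod_{i=j+1}^{N}\left(1-\tfrac{1}{l_i}\right).$$

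The heart of the argument is a lower bound on $P(N):=\prod_{i=1}^N(1-1/l_i)$. Since $l_k \sim k\log k$ by the lucky number theorem, partial summation yields $\sum_{l_i\le y} 1/l_i = \log\log y + O(1)$, and hence a Mertens-type estimate $P(N) \gg 1/\log l_N \gg 1/\log x$, using $l_N \le L(2x) = O(x/\log x)$. The error sum above is trivially at most $N = O(x/\log^2 x)$, strictly smaller than the main term $\asymp x/\log x$. It follows that for some explicit $X_0$ the right-hand side exceeds $1$ whenever $x\ge X_0$, which proves the theorem in that range.

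I would close the gap by listing the lucky numbers up to $2X_0$ and checking that every interval $[x,2x]$ with $4\le x\le X_0$ contains one; the short initial segment $\{1,3,7,9,13,15,21,25,31,\dots\}$ already covers many of the small $x$. The main obstacle is making all constants in the Mertens-type estimate fully explicit so that $X_0$ is small enough for the finite verification to be feasible by hand (or by a brief computation). If cited literature only supplies asymptotic, non-effective versions, then I would derive an effective product bound directly from Proposition \ref{prop1} by carefully tracking the accumulated ceiling errors and using elementary bounds $1-1/l_i \ge \exp(-1/l_i - 1/l_i^2)$ term by term.
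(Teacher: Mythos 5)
Your telescoping of the sieve recursion is sound and essentially reproduces the paper's Lemma \ref{lma1}: iterating $R(i+1,y)=\lceil R(i,y)(1-1/l_{i+1})\rceil$ with the ceiling inequalities gives exactly the bounds $y\prod_i(1-1/l_i)\le L(y)<y\prod_i(1-1/l_i)+L(L(y))$, and your error sum $\sum_j\prod_{i>j}(1-1/l_i)\le N=L(L(2x))$ is the same $L(L(\cdot))$ term. The genuine gap is in how you then handle the product $P(N)=\prod_{i\le N}(1-1/l_i)$. The theorem claims the result for \emph{all} $x\ge 4$, and your Mertens-type lower bound $P(N)\gg 1/\log x$ is, as stated, asymptotic and non-effective: the Hawkins--Briggs Mertens analogue and the lucky number theorem that feed into it are proved only for sufficiently large arguments with no explicit constants, so you cannot extract an $X_0$ from them, and the fallback you sketch (an effective product bound via $1-1/l_i\ge\exp(-1/l_i-1/l_i^2)$ plus explicit partial summation over lucky numbers) is itself a substantial unproved piece of work --- it requires an explicit-error form of the lucky number theorem that does not exist in the literature. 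The paper's own introduction flags exactly this: the lucky number theorem already gives Bertrand for large $x$; the content of the section is effectivity. So your proposal proves the "sufficiently large $x$" version but leaves the actual statement open.

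The paper avoids the Mertens estimate entirely by a self-referential substitution that you may find worth adopting. Writing both bounds of Lemma \ref{lma1} with the factor $(1-1/l_1)=1/2$ pulled out, subtraction gives
$$L(2x)-L(x)>\frac{x}{2}\prod_{i=2}^{L(L(x))}\Bigl(1-\frac{1}{l_i}\Bigr)-L(L(x)),$$
and then the \emph{upper} bound of the same lemma is used again to replace $\frac{x}{2}\prod_{i\ge 2}(1-1/l_i)$ by $L(x)-L(L(x))$, yielding $L(2x)-L(x)>L(x)-2L(L(x))$ with no product left at all. Positivity then reduces to the completely elementary inequality $L(y)\le y/2$ for $y\ge 10$ (two applications of the sieve formula), which gives the explicit threshold $L(x)\ge 10$, i.e.\ $x\ge 33$, and a finite check of $4\le x<33$ finishes. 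If you want to salvage your route, you should replace the Mertens step by this substitution trick, or else actually carry out the effective product bound you defer to the last paragraph.
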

We first need to prove the following lemma.
\begin{lemma} \label{lma1}
    For any $x\in [1,\infty), $
    $$x \prod_{i=1}^{{L(L(x))}}{\left(1-\frac{1}{l_i}\right)}\leq L(x)< x \prod_{i=1}^{{L(L(x))}}{\left(1-\frac{1}{l_i}\right)} +L(L(x)),$$
    where $l_{{L(L(x))}}$ is the largest lucky number so that $l_{{L(L(x))}}\leq L(x)$.
\end{lemma}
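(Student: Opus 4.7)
The plan is to apply Proposition \ref{prop1}, which writes $L(x)$ as a sequence of nested ceilings, and then bracket that expression with the elementary two-sided bound $y \le \lceil y \rceil < y+1$. Set $k := L(L(x))$, $f_0 := \lfloor x \rfloor$, and define recursively $f_{i+1} := \lceil f_i(1-1/l_{i+1}) \rceil$ for $0 \le i < k$. Proposition \ref{prop1} then identifies $L(x) = f_k$, and the task is to compare $f_k$ with the telescoping product $\prod_{i=1}^{k}(1-1/l_i)$.

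For the lower bound, I would apply $\lceil y \rceil \ge y$ at every level: iterating the inequality $f_{i+1} \ge f_i(1-1/l_{i+1})$ across the $k$ layers of ceilings gives $f_k \ge f_0 \prod_{i=1}^{k}(1-1/l_i)$, and since $L$ is constant on $[\lfloor x \rfloor, \lfloor x \rfloor +1)$ this yields the desired left-hand inequality.

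For the upper bound, I would apply $\lceil y \rceil < y+1$ at every level: the inequality $f_{i+1} < f_i(1-1/l_{i+1}) + 1$ propagates through the recursion and, after a straightforward induction on $i$, produces
$$f_k < f_0 \prod_{i=1}^{k}\!\left(1-\tfrac{1}{l_i}\right) + \sum_{j=1}^{k}\prod_{i=j+1}^{k}\!\left(1-\tfrac{1}{l_i}\right).$$
Since each factor $(1-1/l_i)$ lies in $(0,1)$ (with the $j=k$ term being the empty product, equal to $1$), every term of the residual sum is at most $1$, so the whole sum is bounded by $k = L(L(x))$; combined with $f_0 \le x$ this is the right-hand inequality. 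The only genuine obstacle is the bookkeeping: one has to carefully line up the indices of the nested ceilings in Proposition \ref{prop1} with the recursion $f_0,f_1,\ldots,f_k$, and then verify that the extra $+1$ generated at each step, after being multiplied by the subsequent factors, still telescopes into an error of at most $L(L(x))$.
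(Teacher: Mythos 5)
Your proposal is essentially the paper's own proof: both bracket the nested-ceiling formula of Proposition \ref{prop1} with $y\le\lceil y\rceil< y+1$, propagate the extra $+1$ through each layer of the recursion, and bound the resulting residual sum of partial products by $L(L(x))$ because each partial product of the factors $(1-1/l_i)$ is at most $1$. The one (shared) loose end is that this argument really yields the lower bound with $\lfloor x\rfloor$ in place of $x$, and your remark that $L$ is constant on $[\lfloor x\rfloor,\lfloor x\rfloor+1)$ does not by itself upgrade $\lfloor x\rfloor\prod_{i}(1-1/l_i)\le L(x)$ to $x\prod_{i}(1-1/l_i)\le L(x)$ --- but the paper's proof is equally terse on this point, and the lemma is only invoked at integer $x$.
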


\begin{proof}
The lower bound follows directly by applying the formula from Proposition \ref{prop1}, and the fact that $y<\ceil{y}$ for all $y.$

For the upper bound, consider the sequence $R(i,x).$ Using the fact that $\ceil{y}\leq y+1$ for any $y\geq0$, it follows that $R(i+1,x)=\ceil*{R(i,x)-R(i,x)/l_{i+1}}\leq R(i,x)(1-1/l_{i+1})+1$ for all $i$.
Applying this at each step to Proposition \ref{prop1} gives 
\begin{align*}
     L(x)
     &=\ceil*{\dotsi \ceil*{\ceil*{\ceil*{\floor{x}\hk*{1-\frac{1}{l_1}}}\left(1-\frac{1}{l_2}\right)}\left(1-\frac{1}{l_3}\right)}\dotsi \left(1-\frac{1}{l_{L(L(x))}}\right)}\\
     & \leq \ceil*{\dotsi \ceil*{\ceil*{\hk*{\floor{x}\hk*{1-\frac{1}{l_1}}+1}\left(1-\frac{1}{l_2}\right)}\left(1-\frac{1}{l_3}\right)}\dotsi \left(1-\frac{1}{l_{L(L(x))}}\right)}\\
     & \leq \ceil*{\dotsi \ceil*{\hk*{\hk*{\floor{x}\hk*{1-\frac{1}{l_1}}+1}\left(1-\frac{1}{l_2}\right)+1}\left(1-\frac{1}{l_3}\right)}\dotsi \left(1-\frac{1}{l_{L(L(x))}}\right)}\\
     & \leq \ceil*{\dotsi \hk*{\hk*{\hk*{\floor{x}\hk*{1-\frac{1}{l_1}}+1}\left(1-\frac{1}{l_2}\right)+1}\left(1-\frac{1}{l_3}\right)+1}\dotsi \left(1-\frac{1}{l_{L(L(x))}}\right)}\\
     & \; \;\vdots\\
     & \leq \hk*{\dotsi \hk*{\hk*{\hk*{\floor{x}\hk*{1-\frac{1}{l_1}}+1}\left(1-\frac{1}{l_2}\right)+1}\left(1-\frac{1}{l_3}\right)+1}\dotsi \left(1-\frac{1}{l_{L(L(x))}}\right)}+1\\
     &= \hk*{\dotsi \hk*{\hk*{\floor{x}\hk*{1-\frac{1}{l_1}}\left(1-\frac{1}{l_2}\right)+\left(1-\frac{1}{l_2}\right)+1}\left(1-\frac{1}{l_3}\right)+1}\dotsi \left(1-\frac{1}{l_{L(L(x))}}\right)}+1\\
      & \; \;\vdots\\
      &=\floor{x}\prod_{i=1}^{{L(L(x))}}{\left(1-\frac{1}{l_i}\right)} +\sum_{i=1}^{{L(L(x))}}{\prod_{j=i}^{{L(L(x))}}{\left(1-\frac{1}{l_j}\right)}}.
\end{align*}
Note that $\prod\limits_{j=i}^{{L(L(x))}}{\left(1-\frac{1}{l_j}\right)}<1$ since each term in the product is less than one.
Hence 
\begin{equation}
    L(x) < x \prod_{i=1}^{{L(L(x))}}{\left(1-\frac{1}{l_i}\right)} +\sum_{i=1}^{{L(L(x))}}{1}.
\end{equation}
Finally we get the upper bound 
\begin{equation}
    L(x)<x \prod_{i=1}^{{L(L(x))}}{\left(1-\frac{1}{l_i}\right)} +L(L(x)).
\end{equation}
\end{proof}

\begin{proof}
[Proof (Theorem \ref{thmB})]
Assume that for some integer value of $x$, there is no lucky number between $x$ and $2x$. Thus $L(x)=L(2x)$.  For the sake of contradiction, we will show that for large enough $x$, $L(2x)-L(x)>0$ contradicting that there is no lucky number between $x$ and $2x$. 

Note that ${L(L(x))}={L(L(2x))}$ from the assumption $L(x)=L(2x)$. From the lower bound in Lemma \ref{lma1} for $L(2x)$
$$L(2x)\geq\frac{2x}{2} \prod_{i=2}^{{L(L(x))}}{\left(1-\frac{1}{l_i}\right)}=x\prod_{i=2}^{{L(L(x))}}{\left(1-\frac{1}{l_i}\right)}.$$
From the upper bound in Lemma \ref{lma1} for $L(x)$
$$L(x)<\frac{x}{2} \prod_{i=2}^{{L(L(x))}}{\left(1-\frac{1}{l_i}\right)} +L(L(x)).$$
Subtracting the previous two equations gives the following
\begin{align}
    L(2x)-L(x)&>\frac{2x}{2} \prod_{i=2}^{{L(L(x))}}{\left(1-\frac{1}{l_i}\right)}-\frac{x}{2} \prod_{i=2}^{{L(L(x))}}{\left(1-\frac{1}{l_i}\right)} -L(L(x)) \notag \\
    &>\frac{x}{2} \prod_{i=2}^{{L(L(x))}}{\left(1-\frac{1}{l_i}\right)} -L(L(x)).\label{eq32}
\end{align}
Rearranging the upper bound in Lemma \ref{lma1} 
$$ L(x)-L(L(x))<\frac{x}{2} \prod_{i=2}^{{L(L(x))}}{\left(1-\frac{1}{l_i}\right)}$$
and substituting this into equation (\ref{eq32}) we get 
\begin{align}
    L(2x)-L(x)&>\hk*{L(x)-L(L(x))}-L(L(x))\notag\\
    &=L(x)-2L(L(x)).\label{eq33}
\end{align}

Let $y=L(x)$. Note that for large enough $y$, by applying the formula from Proposition \ref{prop1}, it follows that $l_{{L(L(y))}} \geq 3$ for $y \geq 7$. Thus  
$$L(y) \leq \ceil*{\ceil*{y/2}2/3}\leq \ceil*{\hk*{y/2 +1}2/3}=\ceil*{y/3+2/3}\leq y/3+2/3+1\leq y/2 \text{ for } y\geq 10.$$ 

\begin{equation}
  \text{So for } y\geq10, L(y)\leq y/2=L(x)/2 
\end{equation}
Note that since $y=L(x)$, we have $L(y)=L(L(x))$. It then follows $L(L(x))=L(y)\leq L(x)/2$. Substitute this into equation (\ref{eq33}) to get 
\begin{equation}
L(2x)-L(x)>L(x)-2(L(x)/2)=0
\end{equation}
contradicting the assumption.
This happens whenever $y\geq \max\{7,10\}=10$, so when $L(x)\geq10$. This is when $x\geq 33$ since $L(33)=10$. By inspection of the lower cases, it follows that there is always a lucky number between $x$ and $2x$ for all $x\geq 4$.
\end{proof}

\section{On the Order of Magnitude of the Difference Between Consecutive Lucky Numbers} \label{s3}
In this section, we prove the following theorem.
\begin{theorem}\label{thm4.2}
    For sufficiently large $n$, the following holds 
    $$l_{n+1}-l_n=O\hk*{\sqrt{{l_n}\log{l_n}}},$$
    or equivalently, using the lucky number theorem 
    $$l_{n+1}-l_n=O\hk*{\sqrt{n}\log{n}}.$$
\end{theorem}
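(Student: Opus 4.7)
The plan is to leverage the explicit formula from Proposition \ref{prop2}, specifically equation (\ref{eq3.11}), which expresses $l_n$ as a composition of simple floor operations. Each stage simplifies to $f_i(y) = \lfloor (yl_i-1)/(l_i-1)\rfloor = y + \lfloor (y-1)/(l_i-1)\rfloor$, so that $l_n = f_1(f_2(\cdots f_{\beta}(n)\cdots))$ where $\beta = L(n)$. When $n$ is not itself a lucky number we have $L(n) = L(n+1)$, so evaluating the same composition at $n+1$ produces $l_{n+1}$; thus $l_{n+1} - l_n$ is exactly the effect of replacing $n$ by $n+1$ at the innermost level. The few remaining cases, where $n$ is a lucky number, can be handled separately by inserting one extra outer stage.

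I would then track the cascade of intermediate differences. Writing $g_i := f_i(f_{i+1}(\cdots f_\beta(n)\cdots))$ and $g'_i := f_i(f_{i+1}(\cdots f_\beta(n+1)\cdots))$, and setting $\Delta_i := g'_i - g_i$, one has $\Delta_{\beta+1} = 1$ and $\Delta_1 = l_{n+1} - l_n$. A direct computation using $f_{i-1}(y) = y + \lfloor (y-1)/(l_{i-1}-1)\rfloor$ yields the step-by-step recurrence
\[ \Delta_{i-1} = \Delta_i + \left\lfloor\frac{g_i + \Delta_i - 1}{l_{i-1}-1}\right\rfloor - \left\lfloor\frac{g_i - 1}{l_{i-1}-1}\right\rfloor, \]
and a residue analysis shows the correction term always lies in $\{\lfloor \Delta_i/(l_{i-1}-1)\rfloor, \lfloor \Delta_i/(l_{i-1}-1)\rfloor + 1\}$. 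In particular, while $\Delta_i < l_{i-1}-1$ the difference grows by at most $1$ per stage; once $\Delta_i$ is comparable to $l_{i-1}-1$ it is amplified by roughly the factor $l_{i-1}/(l_{i-1}-1)$.

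The key controlling quantity is $\prod_{i=1}^{\beta} l_i/(l_i-1)$, which equals $\Theta(\log l_n)$ by the lucky analogue of Mertens' third theorem (implicit in the lucky number theorem and consistent with Lemma \ref{lma1}). A purely multiplicative bound $\Delta_{i-1} \leq \Delta_i \cdot l_{i-1}/(l_{i-1}-1) + 1$ telescopes to $\Delta_1 = O(\beta \log l_n) = O(l_n)$, which is far too weak. Sharpening this accumulation is the main technical obstacle. The plan is to split the stages at an index $i_0$ chosen so that $l_{i_0}$ is of order $\sqrt{l_n \log l_n}$: in the stages $i > i_0$ (large $l_{i-1}$) the difference $\Delta_i$ stays in the regime where each correction is $0$ or $1$, so only a bounded number of $+1$ contributions can fire; in the stages $i \leq i_0$ a starting difference of controlled size is amplified by at most the Mertens-type partial product $\prod_{i \leq i_0} l_i/(l_i-1) = O(\log l_n)$. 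Balancing the two contributions yields the target $O(\sqrt{l_n \log l_n})$, and the equivalent form $O(\sqrt{n}\log n)$ follows by substituting $l_n \sim n\log n$ from the lucky number theorem.
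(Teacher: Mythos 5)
Your setup is the same as the paper's: iterate formula (\ref{eq3.11}), run the two cascades for $n$ and $n+1$ in parallel, track the difference $\Delta_i$, observe that each stage either multiplies the running difference by roughly $l/(l-1)$ or additionally fires a unit carry, and finish with the Mertens-type product $\prod l/(l-1)=O(\log l_n)$ together with the lucky number theorem. The split point you choose, $l_{i_0}\asymp\sqrt{l_n\log l_n}$, is also exactly where the paper cuts (there it appears as the index at which the quotient $q_i=\lfloor n_i/(l_{\beta-i}-1)\rfloor$ reaches $N=\lceil\sqrt{l_n/\log l_n}\rceil$). So the architecture is right, and the minor slip about which of $n$ or $n+1$ must be lucky for $\beta'=\beta+1$ is harmless.

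The genuine gap is the sentence claiming that in the large-modulus regime ``only a bounded number of $+1$ contributions can fire.'' Nothing in your residue analysis supports this, and it is the entire technical content of the theorem. There are on the order of $n/\log n$ stages with $l_{i-1}>\sqrt{l_n\log l_n}$, and a priori each one can contribute a unit carry; if they all did, you would get $\Delta=O(n/\log n)$ before the Mertens amplification, which is far too weak. The paper's mechanism for controlling the carries is structural, not a counting of stages: writing $n_{i-2}=q_{i-2}(l_{\beta-i+2}-1)+r_{i-2}$, a carry fires only when the remainder $r_{i-2}$ lands in a window of length about $e_{i-1}$ (equations (\ref{eq49}), (\ref{eq50})), and within a block of stages sharing the same quotient $q$ the remainders increase by at least $1+2q$ per stage (equation (\ref{eq53})), because consecutive lucky numbers differ by at least $2$ while the sequence $(n_i)$ increases by at least $1$. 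Hence each quotient block contributes at most $2+\lfloor (e-1)/(1+2q)\rfloor$ carries, and summing over quotient values $k=1,2,\dots,N$ via the recursion (\ref{eq62}) gives $O(N)=O(\sqrt{l_n/\log l_n})$ carries in total --- not $O(1)$. Without this quotient--remainder argument (or a substitute for it) your first regime is uncontrolled and the proof does not close; with it, your outline becomes the paper's proof.
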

Start with any $n \in \mathbb{N}$. The requirement is to find an upper bound for $l_{n+1}-l_n$. The proof uses the formula in Proposition \ref{prop2} (\ref{eq3.11}) (this will be referred to as formula (\ref{eq3.11}) for the remainder of the proof). Similarly to Proposition \ref{prop2}, we define $\beta=L(n)$ so that $l_{\beta} $ is the largest lucky number less than or equal to $n$. With this in mind, we define a new sequence. 

    Define the finite sequence $\hk*{n_i}_{0}^{\beta}$, where $n_0=n$ and 
    $$n_{i+1}=\floor*{\frac{n_i l_{\beta-i}-1}{l_{\beta-i}-1}}.$$
    Where $l_\beta$ is the largest lucky number less than or equal to $n$.

Note that this is an increasing sequence, so $n_{i+1}\geq n_i$ and iterating this sequence gives us formula (\ref{eq3.11}). Therefore, it follows that $n_\beta=l_n$.
Consider formula (\ref{eq3.11}) applied to $n+1$. This gives the sequence (as in the definition) $\hk*{(n+1)_i}_{0}^{\beta'}$, where $(n+1)_0=n+1$ and $\beta'=L(n+1)$, thus $l_{\beta'}$ is the largest lucky number less than or equal to $n+1$. The objective is to find an upper bound for $l_{n+1}$ in terms of $l_n$. This will be done by finding an upper bound for each term $(n+1)_i$ in terms of some term in the sequence $\hk*{n_i}_{0}^{\beta}$. Note that the biggest difference between $l_{n+1}$ and $l_n$ could be when $l_{\beta'}=n+1$ (since this would imply that $\beta'=\beta+1$, hence there are more terms in $\hk*{(n+1)_i}_{0}^{\beta'}$ than there are in $\hk*{n_i}_{0}^{\beta}$ and the sequences are increasing). Therefore, the assumption will be made that $l_{\beta'}=n+1$ (since the proof method will yield a smaller upper bound if not). 

Using the definition and the fact that $n+1=(n+1)_0$ and $l_{\beta'}=n+1$ 
$$
(n+1)_1=\floor*{\frac{(n+1)(n+1)-1}{(n+1)-1}}=\floor*{\frac{n^2+2n}{n}}=n+2.
$$
Substituting this, it follows that
$$
(n+1)_2=\floor*{\frac{(n+2)l_\beta-1}{l_\beta-1}}=\floor*{\frac{nl_\beta-1}{l_\beta-1}+\frac{2l_\beta}{l_\beta-1}}.
$$
Note that the floor function satisfies the following property: for any $x,y\in \mathbb{R},$
\begin{equation} \label{eq37}
\floor{x}+\floor{y}\leq \floor{x+y}\leq \floor{x}+\floor{y}+1.
\end{equation}
This property inspires the following definition of a sequence of errors. Define the sequence $(e_i)$ as follows, $e_1=2$, so that $(n+1)_1 \leq n_0+e_1=n+2$.

For $i\geq 2$, this pattern continues in such a way that $e_{i}$ depends on $e_{i-1}$. Applying the definition of $(n_i),\, ((n+1)_i)$, 

\begin{align}
    (n+1)_2 &\leq \floor*{\frac{(n_0+e_1)l_\beta-1}{l_\beta-1}}=\floor*{\frac{n_0l_\beta-1}{l_\beta-1}+\frac{2l_\beta}{l_\beta-1}} \leq \floor*{\frac{n_0l_\beta-1}{l_\beta-1}}+e_2=n_1+e_2, \notag \\
    (n+1)_3 &\leq \floor*{\frac{(n_1+e_2)l_{\beta-1}-1}{l_{\beta-1}-1}}=\floor*{\frac{n_1l_{\beta-1}-1}{l_{\beta-1}-1}+\frac{e_2l_{\beta-1}}{l_{\beta-1}-1}} \leq \floor*{\frac{n_1l_{\beta-1}-1}{l_{\beta-1}-1}}+e_3=n_2+e_3, \notag \\
    \; \; \; \; \; \vdots &\notag \\
    (n+1)_i &\leq \floor*{\frac{(n_{i-2}+e_{i-1})l_{\beta-i+2}-1}{l_{\beta-i+2}-1}}=\floor*{\frac{n_{i-2}l_{\beta-i+2}-1}{l_{\beta-i+2}-1}+\frac{e_{i-1}l_{\beta-i+2}}{l_{\beta-i+2}-1}} \notag \\
    &\leq \floor*{\frac{n_{i-2}l_{\beta-i+2}-1}{l_{\beta-i+2}-1}}+e_{i}=n_{i-1}+e_{i}. \label{eq38}
\end{align}
Therefore, it follows that 
\begin{equation} \label{eq39}
e_i\geq \floor*{\frac{(n_{i-2}+e_{i-1})l_{\beta-i+2}-1}{l_{\beta-i+2}-1}}-n_{i-1}.
\end{equation}
Note that $e_i$ is not necessarily well defined, instead the values of $(e_i)$ are our own choice and can be anything that gives an upper bound as shown above. Therefore, any error as in equation (\ref{eq38},\ref{eq39}) will suffice. The important part is that $e_i$ depends on $e_{i-1}$ to ensure that the upper bound $(n+1)_i\leq n_{i-1}+e_{i}$ still holds.  As a side note, if $n+1\neq l_{\beta'}$, then the sequence can be defined in the same manner except where $e_i=1$ and the inequality $(n+1)_i\leq n_{i}+e_{i}$ should hold. Since $e_{i+1}$ depends on $e_i$, we will not consider this case since it gives a smaller error term.
\begin{prop}
    Note that by equation (\ref{eq37}), to obtain the minimum value for the error, we have
\begin{equation} \label{eq40}
    e_{i}\in \brac*{\floor*{\frac{e_{i-1}l_{\beta-i+2}}{l_{\beta-i+2}-1}},\floor*{\frac{e_{i-1}l_{\beta-i+2}}{l_{\beta-i+2}-1}}+1}.
\end{equation}
\end{prop}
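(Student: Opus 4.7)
The plan is to derive the claimed two-element set directly from the lower bound (\ref{eq39}) on $e_i$ by applying the floor-function inequality (\ref{eq37}) already cited in the excerpt. First, I would split the numerator inside the outer floor in (\ref{eq39}) to separate the ``$n$-part'' from the ``$e$-part'':
$$\left\lfloor\frac{(n_{i-2}+e_{i-1})l_{\beta-i+2}-1}{l_{\beta-i+2}-1}\right\rfloor=\left\lfloor\frac{n_{i-2}l_{\beta-i+2}-1}{l_{\beta-i+2}-1}+\frac{e_{i-1}l_{\beta-i+2}}{l_{\beta-i+2}-1}\right\rfloor.$$
Then I would set $x=(n_{i-2}l_{\beta-i+2}-1)/(l_{\beta-i+2}-1)$ and $y=e_{i-1}l_{\beta-i+2}/(l_{\beta-i+2}-1)$, and observe from the defining recurrence of $(n_i)_0^{\beta}$ that $\lfloor x\rfloor=n_{i-1}$ exactly.

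Next, I would invoke the two-sided inequality $\lfloor x\rfloor+\lfloor y\rfloor\leq\lfloor x+y\rfloor\leq\lfloor x\rfloor+\lfloor y\rfloor+1$ from (\ref{eq37}) and subtract $n_{i-1}=\lfloor x\rfloor$ from all three terms. This produces
$$\lfloor y\rfloor\leq\left\lfloor\frac{(n_{i-2}+e_{i-1})l_{\beta-i+2}-1}{l_{\beta-i+2}-1}\right\rfloor-n_{i-1}\leq\lfloor y\rfloor+1.$$
Since the right-hand side of (\ref{eq39}) is an integer, the smallest integer $e_i$ that verifies that inequality is precisely the value of that integer, and by the bracketing above it must equal either $\lfloor y\rfloor$ or $\lfloor y\rfloor+1$. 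Substituting $y=e_{i-1}l_{\beta-i+2}/(l_{\beta-i+2}-1)$ back into these two possibilities yields exactly the set stated in the proposition.

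I do not anticipate a serious obstacle: the result is essentially a direct instantiation of the additivity-up-to-one property of the floor function applied to the sequence's defining recurrence. The only point requiring care is to recognize that the subtracted term $n_{i-1}$ already coincides with $\lfloor x\rfloor$ by the construction of $(n_i)_0^{\beta}$, so that the $x$-contribution cancels cleanly and only the $y$-contribution (plus possibly $1$) survives. A brief verification that both extreme values in the bracket are actually attainable, by choosing appropriate fractional parts of $x$ and $y$, would round off the argument and justify why the set cannot be reduced to a singleton.
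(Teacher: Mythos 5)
Your proposal is correct and follows essentially the same route as the paper: the paper's justification is precisely the application of the two-sided floor inequality (\ref{eq37}) to the decomposition $\frac{(n_{i-2}+e_{i-1})l_{\beta-i+2}-1}{l_{\beta-i+2}-1}=\frac{n_{i-2}l_{\beta-i+2}-1}{l_{\beta-i+2}-1}+\frac{e_{i-1}l_{\beta-i+2}}{l_{\beta-i+2}-1}$, with the first term's floor identified as $n_{i-1}$ via the recurrence, exactly as you do (the paper carries out the same bookkeeping explicitly in its subsequent quotient--remainder computations). Your added remark about checking that both values in the bracket are attainable is a sensible refinement but not needed for the statement as used.
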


Consider the following calculations. Let 
\begin{equation} \label{eq41}
n_{i-2}=q_{i-2}(l_{\beta-i+2}-1)+r_{i-2}, \text{ where } 0\leq r_{i-2}\leq l_{\beta-i+2}-2, i\geq2.
\end{equation}
Substituting this, we have 
\begin{align}
    (n+1)_{i} &\leq \floor*{\frac{n_{i-2}l_{\beta-i+2}-1}{l_{\beta-i+2}-1}+\frac{e_{i-1}l_{\beta-i+2}}{l_{\beta-i+2}-1}} \notag \\
    &= \floor*{\frac{(q_{i-2}(l_{\beta-i+2}-1)+r_{i-2})l_{\beta-i+2}-1}{l_{\beta-i+2}-1}+\frac{e_{i-1}l_{\beta-i+2}}{l_{\beta-i+2}-1}}\notag\\
    &=q_{i-2}l_{\beta-i+2}+\floor*{\frac{r_{i-2}l_{\beta-i+2}-1}{l_{\beta-i+2}-1}+\frac{e_{i-1}(l_{\beta-i+2}-1)+e_{i-1}}{l_{\beta-i+2}-1}}\notag\\
    &=q_{i-2}l_{\beta-i+2}+e_{i-1}+\floor*{\frac{r_{i-2}(l_{\beta-i+2}-1)+r_{i-2}-1}{l_{\beta-i+2}-1}+\frac{e_{i-1}}{l_{\beta-i+2}-1}}\notag\\
    &=q_{i-2}l_{\beta-i+2}+r_{i-2}+e_{i-1}+\floor*{\frac{r_{i-2}-1+e_{i-1}}{l_{\beta-i+2}-1}}.\label{eq42}
\end{align}
Using the definition of $(n_i)$ 
\begin{align}
    n_{i-1} &=\floor*{\frac{n_{i-2}l_{\beta-i+2}-1}{l_{\beta-i+2}-1}} \notag \\
    &=  \floor*{\frac{(q_{i-2}(l_{\beta-i+2}-1)+r_{i-2})l_{\beta-i+2}-1}{l_{\beta-i+2}-1}} \notag\\
    &=q_{i-2}l_{\beta-i+2}+\floor*{\frac{r_{i-2}(l_{\beta-i+2}-1)+r_{i-2}-1}{l_{\beta-i+2}-1}}\notag \\
    &=q_{i-2}l_{\beta-i+2}+r_{i-2}+\floor*{\frac{r_{i-2}-1}{l_{\beta-i+2}-1}}.\label{eq43}
\end{align}
Note the following 
\begin{equation} \label{eq44}
    \floor*{\frac{r_{i-2}-1}{l_{\beta-i+2}-1}}=
    \begin{cases}
        -1,& \text{if } r_{i-2}=0;\\
        0,& \text{otherwise.}
    \end{cases}
\end{equation}
Using equation (\ref{eq43}) and (\ref{eq44}) gives the following result: 
\begin{align}
    &\floor*{\frac{n_{i-2}l_{\beta-i+2}-1}{l_{\beta-i+2}-1}}+\floor*{\frac{e_{i-1}l_{\beta-i+2}}{l_{\beta-i+2}-1}}=q_{i-2}l_{\beta-i+2}+r_{i-2}+\floor*{\frac{r_{i-2}-1}{l_{\beta-i+2}-1}}+e_{i-1}+\floor*{\frac{e_{i-1}}{l_{\beta-i+2}-1}}\notag \\
    &=\begin{cases}
        q_{i-2}l_{\beta-i+2}+r_{i-2}+e_{i-1}-1+\floor*{\frac{e_{i-1}}{l_{\beta-i+2}-1}},& \text{if } r_{i-2}=0;\\
        q_{i-2}l_{\beta-i+2}+r_{i-2}+e_{i-1}+\floor*{\frac{e_{i-1}}{l_{\beta-i+2}-1}},& \text{otherwise.}
    \end{cases}\label{eq45}
\end{align} 
With this in mind, the next objective is to determine when is
\begin{equation}\label{eq4.10}
   \floor*{\frac{n_{i-2}l_{\beta-i+2}-1}{l_{\beta-i+2}-1}+\frac{e_{i-1}l_{\beta-i+2}}{l_{\beta-i+2}-1}}
    =\floor*{\frac{n_{i-2}l_{\beta-i+2}-1}{l_{\beta-i+2}-1}}+\floor*{\frac{e_{i-1}l_{\beta-i+2}}{l_{\beta-i+2}-1}}? 
\end{equation}
By equations (\ref{eq42}) and (\ref{eq45}), this is when, 
$$
\floor*{\frac{r_{i-2}-1+e_{i-1}}{l_{\beta-i+2}-1}}=\floor*{\frac{e_{i-1}}{l_{\beta-i+2}-1}} \text{, if } r_{i-2}\neq0
$$
and
$$
\floor*{\frac{e_{i-1}-1}{l_{\beta-i+2}-1}}=\floor*{\frac{e_{i-1}}{l_{\beta-i+2}-1}}-1 \text{, if } r_{i-2}=0
$$
To simplify the proof, for the case where $r_{i-2}=0$, use equation (\ref{eq37}) to get the upper bound 
\begin{align} \label{eq46}
    (n+1)_{i} &\leq \floor*{\frac{n_{i-2}l_{\beta-i+2}-1}{l_{\beta-i+2}-1}+\frac{e_{i-1}l_{\beta-i+2}}{l_{\beta-i+2}-1}}
    \leq \floor*{\frac{n_{i-2}l_{\beta-i+2}-1}{l_{\beta-i+2}-1}}+\floor*{\frac{e_{i-1}l_{\beta-i+2}}{l_{\beta-i+2}-1}}+1\\
    &=n_{i-1}+e_i.\notag
\end{align}
Therefore, it follows that
\begin{equation} \label{eq47}
    e_i=\floor*{\frac{e_{i-1}l_{\beta-i+2}}{l_{\beta-i+2}-1}}+1.
\end{equation}
Consider the case where $r_{i-2}\neq0$. Note that by equation (\ref{eq37}), it follows that either 
$$
\floor*{\frac{r_{i-2}-1+e_{i-1}}{l_{\beta-i+2}-1}}=\floor*{\frac{e_{i-1}}{l_{\beta-i+2}-1}},
$$
or
\begin{equation}\label{eq48}
    \floor*{\frac{r_{i-2}-1+e_{i-1}}{l_{\beta-i+2}-1}}=\floor*{\frac{e_{i-1}}{l_{\beta-i+2}-1}}+1.
\end{equation}
Therefore, based on this, it is relevant to rather ask when is 
$$
\floor*{\frac{r_{i-2}-1+e_{i-1}}{l_{\beta-i+2}-1}}=\floor*{\frac{e_{i-1}}{l_{\beta-i+2}-1}}+1?
$$
This can be split into two simple cases. \\
Case A:
if $e_{i-1} < l_{\beta-i+2}-1$. Then clearly
$$
\floor*{\frac{e_{i-1}}{l_{\beta-i+2}-1}}+1=1.
$$
Then equation (\ref{eq48}) can only hold when 
$$
r_{i-2}-1+e_{i-1}\geq l_{\beta-i+2}-1
$$
so that
$$
r_{i-2}\geq l_{\beta-i+2}-e_{i-1}.
$$
Combining this with the quotient remainder theorem gives 
\begin{equation} \label{eq49}
l_{\beta-i+2}-e_{i-1}\leq r_{i-2}\leq l_{\beta-i+2}-2.
\end{equation}
Case B: 
if $e_{i-1} \geq l_{\beta-i+2}-1$. Then simply use the fact that equation (\ref{eq48}) can hold only for the remainders 
\begin{equation} \label{eq50}
0\leq r_{i-2}\leq l_{\beta-i+2}-2<e_{i-1}.
\end{equation}
Recall (\ref{eq41}). Note that in some cases, it is possible for some quotients to satisfy the following \\ $q_{i-2}=q_{i-1}=\cdots=q_{i+j-2}$ for some integers $i,j$.
Applying equation (\ref{eq41}) to the index values, $i-2,i-1,\dots,i+j-2$, we get 
\begin{align*}
    &n_{i-2}=q_{i-2}(l_{\beta-i+2}-1)+r_{i-2}, \text{ where } 0\leq r_{i-2}\leq l_{\beta-i+2}-2, i\geq2.\\
    &n_{i-1}=q_{i-1}(l_{\beta-i+1}-1)+r_{i-1}, \text{ where } 0\leq r_{i-1}\leq l_{\beta-i+1}-2.\\
    &\;\;\;\vdots\\
    &n_{i+j-2}=q_{i+j-2}(l_{\beta-i-j+2}-1)+r_{i+j-2}, \text{ where } 0\leq r_{i+j-2}\leq l_{\beta-i-j+2}-2.
\end{align*}
Using the fact that $q_{i-2}=q_{i-1}=\cdots=q_{i+j-2}$, we have 
\begin{align*}
    &n_{i-2}=q_{i-2}(l_{\beta-i+2}-1)+r_{i-2}, \text{ where } 0\leq r_{i-2}\leq l_{\beta-i+2}-2, i\geq2.\\
    &n_{i-1}=q_{i-2}(l_{\beta-i+1}-1)+r_{i-1}, \text{ where } 0\leq r_{i-1}\leq l_{\beta-i+1}-2.\\
    &\;\;\;\vdots\\
    &n_{i+j-2}=q_{i-2}(l_{\beta-i-j+2}-1)+r_{i+j-2}, \text{ where } 0\leq r_{i+j-2}\leq l_{\beta-i-j+2}-2.
\end{align*}
Keeping this in mind, subtract the first two equations to get 
$$
n_{i-1}-n_{i-2}=q_{i-2}\hk*{l_{\beta-i+1}-l_{\beta-i+2}}+r_{i-1}-r_{i-2}.
$$
Note that $(n_i)$ is a strictly increasing sequence, hence $n_{i-1}-n_{i-2}\geq 1,$ and $l_{\beta-i+1}-l_{\beta-i+2}\leq-2$, it follows that 
\begin{equation} 
    1+2q_{i-2}\leq r_{i-1}-r_{i-2} \implies r_{i-2}+1+2q_{i-2}\leq r_{i-1}.
\end{equation} 
Or, more simply, for any $i-1\leq \tau \leq i+j-2,$ 
\begin{equation} 
r_{\tau-1}+1+2q_{i-2}\leq r_{\tau}.
\end{equation} 
Iterating this recurrence relation: 
\begin{align*}
    r_{i+j-2} &\geq r_{i+j-3}+1+2q_{i-2}\\
    &\geq \hk*{r_{i+j-4}+1+2q_{i-2}}+1+2q_{i-2}=r_{i+j-4}+2\hk*{1+2q_{i-2}}\\
    &\geq r_{i+j-5}+3\hk*{1+2q_{i-2}}\\
    &\;\,\vdots\\
    &\geq r_{i-2}+j\hk*{1+2q_{i-2}}
\end{align*}
Therefore, 
\begin{equation} \label{eq53}
     r_{i+j-2}\geq r_{i-2}+j\hk*{1+2q_{i-2}}.
\end{equation}

Define the following finite sequence of quotients, $(k_i)_{i=1}^{N}$, where 
\begin{equation} \label{eq54}
    N=\ceil*{\frac{\sqrt{l_n}}{\sqrt{\log (l_n)}}}.
\end{equation}
 Let $k_1=q_0=q_1=\cdots=q_{m_1}$, $k_2=q_{m_1+1}=q_{m_1+2}=\cdots=q_{m_2}$, $k_3=q_{m_2+1}=q_{m_2+2}=\cdots=q_{m_3}$, $\cdots$ and so on. Note that this is a subsequence of $(q_i)$. Therefore, every quotient $k_i$ can have multiple corresponding lucky numbers and remainders (this would be the lucky numbers and remainders that correspond to each quotient $q_i$). We also have that $(k_i)$ is a strictly increasing sequence of integers (by construction), and $k_i\geq i$ for all $i$.

Using this new sequence, equation (\ref{eq53}) can be rewritten as 
\begin{equation} \label{eq55}
     r_{i+j-2}\geq r_{i-2}+j\hk*{1+2k}, \text{ for some } k\in (k_i), \text{where }
\end{equation}
$q_{i-2}=q_{i-1}=\cdots=q_{i+j-2}=k$ and note that $j$ comes from the subscripts of the remainders in the equation where $(i+j-2)-(i-2)=j$. Also note that the remainders corresponding to each $k$ are strictly increasing.

The next step is to find an upper bound for each $k\in (k_i)$ on how many corresponding steps (or remainders or lucky numbers) of formula (\ref{eq3.11}) does equation (\ref{eq48}) hold. To do this, it will be assumed that for each $k$, there is a corresponding remainder of 0. It will also be assumed that for each $k_j$, there is a corresponding remainder $r_{i_j}$ such that equations (\ref{eq49}) (case A) and (\ref{eq50}) (case B) hold, $l_{\beta-i_j}-e_{i_j+1}\leq r_{i_j}\leq l_{\beta-i_j}-2$ and $0\leq r_{i_j}\leq l_{\beta-i_j}-2<e_{i_j+1}.$ Note that since the remainders are increasing for each $k_j$, in order to find an upper bound, assume $r_{i_j}$ to be as small as possible. Hence, by these two cases and equation (\ref{eq55}), it follows that an upper bound for each $k_j$ on how many times equation (\ref{eq48}) holds is 
\begin{equation} \label{eq56}
    2+\floor*{\frac{e_{i_j+1}-1}{1+2k_j}}.
\end{equation}
The first two cases are for the remainder of 0 and for $r_{i_j}$ and the term $\floor*{\frac{e_{i_j+1}-1}{1+2k_j}}$ comes from the fact that cases A and B give an interval of less than or equal to $e_{i_j+1}$ for the remainders for which equation (\ref{eq48}) holds and then by applying equation (\ref{eq55}), the result follows.

Also note that in all other cases, and by equation (\ref{eq4.10}) the following can be applied 
\begin{align}
(n+1)_i&\leq \floor*{\frac{n_{i-2}l_{\beta-i+2}-1}{l_{\beta-i+2}-1}+\frac{e_{i-1}l_{\beta-i+2}}{l_{\beta-i+2}-1}} \leq \floor*{\frac{n_{i-2}l_{\beta-i+2}-1}{l_{\beta-i+2}-1}}+{\frac{e_{i-1}l_{\beta-i+2}}{l_{\beta-i+2}-1}}=n_{i-1}+e_i.\label{eq57}
\end{align} 

Define the following sequence $(s_i)_{i=1}^{N+1}$ where $s_1=2=e_1$ and then for $i\geq2$, 
\begin{equation}\label{eq62}
    s_i=2+s_{i-1}+\frac{s_{i-1}+1}{2\hk{k_i-1}+1}.
\end{equation}

\begin{lemma} \label{lma5.2}
For any $j$ where $1\leq j \leq N$, we have
    \begin{equation*}
    (n+1)_{m_j}\leq n_{m_j-1}+\hk*{s_{j+1}}\prod_{i=0}^{m_j-2}{\hk*{\frac{l_{\beta-i}}{l_{\beta-i}-1}}},
    \end{equation*}
\end{lemma}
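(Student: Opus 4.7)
My plan is to prove Lemma \ref{lma5.2} by strong induction on $j$, iterating the error recurrence for $(e_i)$ through each of the segments defined by $(k_i)$ and matching the accumulated coefficient (after factoring out the telescoping product of $l_{\beta-i}/(l_{\beta-i}-1)$ factors) with $s_{j+1}$. By \eqref{eq47} and \eqref{eq57}, at each step $i$ either $e_i \leq e_{i-1} l_{\beta-i+2}/(l_{\beta-i+2}-1)$, or $e_i$ equals this quantity plus $1$ (when \eqref{eq48} holds). Unfolding the recurrence yields $e_i \leq e_1 \prod_{t=2}^{i} l_{\beta-t+2}/(l_{\beta-t+2}-1) + \sum_{t} \prod_{s=t+1}^{i} l_{\beta-s+2}/(l_{\beta-s+2}-1)$, where the sum runs over the steps $t$ at which a $+1$ error occurred. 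Since every tail product is bounded by the full product, overestimating gives $e_{m_j} \leq (s_1 + M) \prod_{i=0}^{m_j-2} l_{\beta-i}/(l_{\beta-i}-1)$, where $M$ denotes the total number of $+1$ errors through step $m_j$.

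For the base case $j = 1$, begin with $e_1 = s_1 = 2$. By \eqref{eq56}, segment $1$ contributes at most $M_1 = 2 + \lfloor (e_{i_1+1}-1)/(1+2k_1)\rfloor$ $+1$ errors. Estimating $e_{i_1+1}$ crudely from the first few recurrence steps (where $e$ has only grown modestly from $s_1 = 2$) and comparing with the definition $s_2 = 2 + s_1 + (s_1+1)/(2(k_2-1)+1)$ yields $s_1 + M_1 \leq s_2$, closing the base case.

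For the inductive step, assume the bound holds at the end of segment $j-1$; in particular, $e_{m_{j-1}} \leq s_j \prod_{i=0}^{m_{j-1}-2} l_{\beta-i}/(l_{\beta-i}-1)$. I continue iterating $e$ through segment $j$, acquiring the additional telescoping factors $\prod_{i=m_{j-1}-1}^{m_j-2} l_{\beta-i}/(l_{\beta-i}-1)$ and at most $M_j = 2 + \lfloor (e_{i_j+1}-1)/(1+2k_j)\rfloor$ new $+1$ errors by \eqref{eq56}. Factoring out the full product and bounding each $+1$ contribution by $1$ as in the base case gives $e_{m_j} \leq (s_j + M_j) \prod_{i=0}^{m_j-2} l_{\beta-i}/(l_{\beta-i}-1)$, and then I match $s_j + M_j$ with $s_{j+1}$ via the defining recurrence.

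The main obstacle is verifying $s_j + M_j \leq s_{j+1}$, equivalently $M_j \leq 2 + (s_j+1)/(2(k_{j+1}-1)+1)$. This requires controlling the quantity $e_{i_j+1}$ appearing in \eqref{eq56} — essentially showing via the induction hypothesis that $e_{i_j+1}$ is bounded by a multiple of $s_j$ — and reconciling the denominator $1+2k_j$ with $2(k_{j+1}-1)+1$. Since $(k_i)$ is a strictly increasing sequence of integers, $k_{j+1} \geq k_j + 1$ so $2(k_{j+1}-1)+1 \geq 2k_j + 1 = 1 + 2k_j$, and closing the inequality likely uses this monotonicity together with the slack coming from the crude tail-product overestimations. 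This careful index tracking across segment boundaries is the crux of the argument.
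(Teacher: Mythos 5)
Your overall strategy --- induction over the quotient segments, unfolding the error recurrence, and matching the accumulated coefficient against the recurrence (\ref{eq62}) for $s_{j+1}$ --- is the same as the paper's. The gap is in how you account for the ``$+1$'' errors. You replace every tail product by the full product \emph{before} counting, obtaining $e_{m_j}\leq (s_j+M_j)\prod_{i=0}^{m_j-2}\left(\frac{l_{\beta-i}}{l_{\beta-i}-1}\right)$ with $M_j$ the \emph{number} of $+1$ errors in segment $j$, and you then need $M_j\leq 2+\frac{s_j+1}{2(k_{j+1}-1)+1}$, a bounded quantity. But by (\ref{eq56}) the count is $M_j\approx 2+\frac{e_{i_j}-1}{1+2k_j}$, and by the induction hypothesis $e_{i_j}\approx s_j\prod_{i=0}^{i_j-2}\left(\frac{l_{\beta-i}}{l_{\beta-i}-1}\right)$; the product factor grows like $\log n$ by Mertens, so $M_j$ is \emph{not} bounded by a multiple of $s_j$, and your stated hope that ``$e_{i_j+1}$ is bounded by a multiple of $s_j$'' is false. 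With your overestimate the extra term becomes $M_j$ times the full product, i.e.\ it contains essentially the \emph{square} of the product, which is far too large for the induction to close.

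The paper handles exactly this point in (\ref{eq59})--(\ref{eq61}) by not discarding the tail products prematurely: each of the $m_j-i_j$ error steps in segment $j$ contributes at most the tail product $\prod_{i=i_j}^{m_j-2}\left(\frac{l_{\beta-i}}{l_{\beta-i}-1}\right)$, while the bound (\ref{eq60}) on the \emph{count} $m_j-i_j$ carries the complementary head product $\prod_{i=0}^{i_j-1}\left(\frac{l_{\beta-i}}{l_{\beta-i}-1}\right)$; multiplying count by tail product reconstitutes a \emph{single} full product with the bounded coefficient $1+\frac{e_1+1}{1+2k_j}$, which is what matches the increment in (\ref{eq62}). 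So the skeleton of your induction is right, but the step ``bounding each $+1$ contribution by $1$'' must be replaced by this pairing of the head product hidden inside the count with the tail product attached to each contribution. A secondary issue: your proposed reconciliation of the denominators via $2(k_{j+1}-1)+1\geq 1+2k_j$ makes $\frac{s_j+1}{2(k_{j+1}-1)+1}$ \emph{smaller} than $\frac{s_j+1}{1+2k_j}$, so the monotonicity of $(k_i)$ pushes the inequality in the wrong direction and cannot by itself supply the slack you need there.
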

\begin{proof}
Create a subsequence $(e_{i_j})$ where $e_{i_j}$ is the first term in $(e_i)$ in which equation (\ref{eq48}) holds for the quotient $k_j$.
Start with $k_1=1$ and assuming the first remainder is 0: 
\begin{align*}
    (n+1)_2&\leq n_1+e_1+\floor*{\frac{e_1}{l_{\beta}-1}}+1, \text{ (by (\ref{eq46}))}\\
    &\leq n_1+e_1+{\frac{e_1-1}{l_{\beta}-1}}+1\\
    &=n_1+e_1\hk*{\frac{l_{\beta}}{l_{\beta}-1}}+1.
\end{align*}
So in this case, we can choose $e_2=e_1\hk*{\frac{l_{\beta}}{l_{\beta}-1}}+1$.
If $e_2\neq e_{i_1}$, then use equation (\ref{eq57}):
\begin{align*}
    (n+1)_3&\leq n_2+e_2\hk*{\frac{l_{\beta-1}}{l_{\beta-1}-1}}\\
    &=n_2+\hk*{e_1\hk*{\frac{l_{\beta}}{l_{\beta}-1}}+1}\hk*{\frac{l_{\beta-1}}{l_{\beta-1}-1}}\\
    &=n_2+e_1\hk*{\frac{l_{\beta}}{l_{\beta}-1}}\hk*{\frac{l_{\beta-1}}{l_{\beta-1}-1}}+\hk*{\frac{l_{\beta-1}}{l_{\beta-1}-1}}.
\end{align*}
Hence, we choose $e_3=e_1\hk*{\frac{l_{\beta}}{l_{\beta}-1}}\hk*{\frac{l_{\beta-1}}{l_{\beta-1}-1}}+\hk*{\frac{l_{\beta-1}}{l_{\beta-1}-1}}$. If $e_3\neq e_{i_1}$, then once again use equation (\ref{eq57}) and repeat this process again. 
\begin{align*}
    (n+1)_4&\leq n_3+e_3\hk*{\frac{l_{\beta-2}}{l_{\beta-2}-1}}\\
    &=n_3+\hk*{e_1\hk*{\frac{l_{\beta}}{l_{\beta}-1}}\hk*{\frac{l_{\beta-1}}{l_{\beta-1}-1}}+\hk*{\frac{l_{\beta-1}}{l_{\beta-1}-1}}}\hk*{\frac{l_{\beta-2}}{l_{\beta-2}-1}}\\
    &=n_3+e_1\hk*{\frac{l_{\beta}}{l_{\beta}-1}}\hk*{\frac{l_{\beta-1}}{l_{\beta-1}-1}}\hk*{\frac{l_{\beta-2}}{l_{\beta-2}-1}}+\hk*{\frac{l_{\beta-1}}{l_{\beta-1}-1}}\hk*{\frac{l_{\beta-2}}{l_{\beta-2}-1}}.
\end{align*}
Repeat this process for all $i< i_1$ to get 
\begin{align}
    (n+1)_{i_1}&\leq n_{i_1-1}+e_1\prod_{i=0}^{i_1-2}{\hk*{\frac{l_{\beta-i}}{l_{\beta-i}-1}}}+\prod_{i=1}^{i_1-2}{\hk*{\frac{l_{\beta-i}}{l_{\beta-i}-1}}}\notag \\
    &\leq n_{i_1-1}+(e_1+1)\prod_{i=0}^{i_1-2}{\hk*{\frac{l_{\beta-i}}{l_{\beta-i}-1}}}\notag\\
    &=n_{i_1-1}+3\prod_{i=0}^{i_1-2}{\hk*{\frac{l_{\beta-i}}{l_{\beta-i}-1}}}.    
    \label{eq58}
\end{align}
We can therefore set $e_{i_1}=(e_1+1)\prod\limits_{i=0}^{i_1-2}{\hk*{\frac{l_{\beta-i}}{l_{\beta-i}-1}}}$.

Next, we have to use equation (\ref{eq46}) to obtain an upper bound 
\begin{align*}
    (n+1)_{i_1+1}&\leq n_{i_1}+\floor*{\frac{e_{i_1}l_{\beta-i_1+1}}{l_{\beta-i_1+1}-1}}+1\\
    &\leq n_{i_1}+{\frac{e_{i_1}l_{\beta-i_1+1}}{l_{\beta-i_1+1}-1}}+1\\
    &=n_{i_1}+(e_1+1)\prod_{i=0}^{i_1-1}{\hk*{\frac{l_{\beta-i}}{l_{\beta-i}-1}}}+1.
\end{align*}
Therefore, we can choose the error $e_{i_1+1}=(e_1+1)\prod\limits_{i=0}^{i_1-1}{\hk*{\frac{l_{\beta-i}}{l_{\beta-i}-1}}}+1$. Note that this is the first of less than or equal to $1+\floor*{\frac{e_{i_1}-1}{1+2k_1}}$ cases by expression (\ref{eq56}). Repeating this process and using equation (\ref{eq46}), it follows that
\begin{align*}
    (n+1)_{i_1+2}&\leq n_{i_1+1}+(e_1+1)\prod_{i=0}^{i_1}{\hk*{\frac{l_{\beta-i}}{l_{\beta-i}-1}}}+\hk*{\frac{l_{\beta-i_1}}{l_{\beta-i_1}-1}}+1\\
    &\leq n_{i_1+1}+(e_1+1)\prod_{i=0}^{i_1}{\hk*{\frac{l_{\beta-i}}{l_{\beta-i}-1}}}+2\hk*{\frac{l_{\beta-i_1}}{l_{\beta-i_1}-1}}.
\end{align*}
Similarly, we have 
\begin{align}
    &(n+1)_{i_1+3} \leq n_{i_1+2}+(e_1+1)\prod_{i=0}^{i_1+1}{\hk*{\frac{l_{\beta-i}}{l_{\beta-i}-1}}}+3\hk*{\frac{l_{\beta-i_1}}{l_{\beta-i_1}-1}}\hk*{\frac{l_{\beta-i_1-1}}{l_{\beta-i_1-1}-1}}\notag \\
    &\;\;\,\ \;\;\vdots\notag\\
    &(n+1)_{m_1} \leq n_{m_1-1}+(e_1+1)\prod_{i=0}^{m_1-2}{\hk*{\frac{l_{\beta-i}}{l_{\beta-i}-1}}}+(m_1-i_1)\prod_{i=i_1}^{m_1-2}{\hk*{\frac{l_{\beta-i}}{l_{\beta-i}-1}}}.\label{eq59}
\end{align}
In equation (\ref{eq59}), $m_1$ is the largest index corresponding to the quotient $k_1$ and similarly $m_i$ is the largest index corresponding to the quotient $k_i$. From expression (\ref{eq56}), it follows that 
$$m_1\leq i_1+1+\floor*{\frac{e_{i_1}-1}{1+2k_1}}\leq i_1+1+\frac{e_{i_1}-1}{1+2k_1}.$$ 
Substituting the equation $e_{i_1}=(e_1+1)\prod\limits_{i=0}^{i_1-2}{\hk*{\frac{l_{\beta-i}}{l_{\beta-i}-1}}}$ in the preceding equation, obtain 
\begin{align}
    m_1&\leq i_1+1+\hk*{(e_1+1)\prod_{i=0}^{i_1-2}{\hk*{\frac{l_{\beta-i}}{l_{\beta-i}-1}}}}\hk*{\frac{1}{1+2k_1}}-\frac{1}{1+2k_1}\notag\\
    &\leq i_1+1+\hk*{\frac{e_1+1}{1+2k_1}}\prod_{i=0}^{i_1-1}{\hk*{\frac{l_{\beta-i}}{l_{\beta-i}-1}}}.\label{eq60}
\end{align}
Substituting this into equation (\ref{eq59}) yields 
\begin{align}
    (n+1)_{m_1} &\leq n_{m_1-1}+(e_1+1)\prod_{i=0}^{m_1-2}{\hk*{\frac{l_{\beta-i}}{l_{\beta-i}-1}}}+\hk*{1+\hk*{\frac{e_1+1}{1+2k_1}}\prod_{i=0}^{i_1-1}{\hk*{\frac{l_{\beta-i}}{l_{\beta-i}-1}}}}\prod_{i=i_1}^{m_1-2}{\hk*{\frac{l_{\beta-i}}{l_{\beta-i}-1}}}\notag\\
    &= n_{m_1-1}+(e_1+1)\prod_{i=0}^{m_1-2}{\hk*{\frac{l_{\beta-i}}{l_{\beta-i}-1}}}+\hk*{1+\hk*{\frac{e_1+1}{1+2k_1}}}\prod_{i=0}^{m_1-2}{\hk*{\frac{l_{\beta-i}}{l_{\beta-i}-1}}}\notag \\
    &=n_{m_1-1}+\hk*{e_1+2+\hk*{\frac{e_1+1}{1+2k_1}}}\prod_{i=0}^{m_1-2}{\hk*{\frac{l_{\beta-i}}{l_{\beta-i}-1}}}.\label{eq61}
\end{align}
Using induction on $j$ and applying the same process as before (this can be seen in equation (\ref{eq61})), the result follows.
\end{proof}
 
Substituting $j=N$ in Lemma \ref{lma5.2}, we have 
\begin{equation}
    (n+1)_{m_N}\leq n_{m_N-1}+\hk*{s_{N+1}}\prod_{i=0}^{m_N-2}{\hk*{\frac{l_{\beta-i}}{l_{\beta-i}-1}}}.\label{eq63}
\end{equation}
Note, this is the inequality corresponding to the quotient $k_N\geq N=\ceil*{\frac{\sqrt{l_n}}{\sqrt{\log (l_n)}}}$. By the way $m_N$ is defined, this is also the last step corresponding to this quotient.

\begin{lemma}
For any $n \in \mathbb{N}$,
    $$s_{n+1}=O(n).$$
\end{lemma}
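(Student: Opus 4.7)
The plan is to prove by induction on $n$ that $s_n \leq 4n$ for all $n \geq 1$, from which the conclusion $s_{n+1} = O(n)$ is immediate. The only structural fact I would use about the sequence $(k_i)$ is the bound $k_i \geq i$ that was already established before its definition; no finer information is needed for a linear upper bound.

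The first step is to pass to a cleaner recurrence. Because $k_i \geq i$, we have $2(k_i - 1) + 1 = 2k_i - 1 \geq 2i - 1$, and so the defining relation in \eqref{eq62} implies
\begin{equation*}
s_i \;=\; 2 + s_{i-1} + \frac{s_{i-1}+1}{2(k_i-1)+1} \;\leq\; 2 + s_{i-1} + \frac{s_{i-1}+1}{2i-1}
\end{equation*}
for every $i \geq 2$. I would carry out the entire induction using only this weakened inequality, discarding the exact dependence on $k_i$.

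The induction itself is then straightforward. The base case $s_1 = 2 \leq 4$ is immediate. For the inductive step, assuming $s_{i-1} \leq 4(i-1)$, I substitute into the weakened recurrence to obtain
\begin{equation*}
s_i \;\leq\; 2 + 4(i-1) + \frac{4(i-1)+1}{2i-1} \;=\; 2 + 4(i-1) + \frac{4i-3}{2i-1},
\end{equation*}
and since $4i - 3 < 4i - 2 = 2(2i-1)$, the last fraction is strictly less than $2$. Hence $s_i < 4 + 4(i-1) = 4i$, closing the induction and giving $s_{n+1} \leq 4(n+1) = O(n)$.

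There is essentially no real obstacle; the only delicate point is calibrating the constant in the inductive hypothesis. The recurrence amplifies $s_{i-1}$ by a factor of roughly $1 + \tfrac{1}{2i-1}$ and adds a constant of about $2$, so a bound of the form $s_i \leq Ci$ propagates precisely when $C \geq 4$, and $C = 4$ is the cleanest choice that makes the algebra fit exactly. Because the recurrence forces $s_i \geq s_{i-1} + 2$ and hence $s_n \geq 2n$, the linear upper bound matches the trivial linear lower bound, so the lemma is sharp up to the constant and cannot be strengthened without imposing stronger hypotheses on $(k_i)$.
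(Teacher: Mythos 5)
Your proof is correct, and it takes a genuinely different and more elementary route than the paper's. Both arguments begin identically, replacing $k_i$ by the lower bound $k_i\ge i$ in the recurrence (\ref{eq62}) to obtain $s_i\le 2+s_{i-1}+\frac{s_{i-1}+1}{2i-1}$. From there the paper rewrites this as $s_i\le (s_{i-1}+2)\left(1+\frac{1}{2i-1}\right)$, unrolls the recurrence into $s_{N+1}\le 4\sum_{c=1}^{N}\prod_{j=c}^{N}\frac{2j+2}{2j+1}$, estimates each product as $O\left(\sqrt{(2N+1)/(2c)}\right)$ using $\log(1+x)\le x$ together with harmonic-number asymptotics, and concludes via $\sum_{c\le N}c^{-1/2}=O(\sqrt{N})$. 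You instead guess the linear bound $s_n\le 4n$ and close the induction with the single observation $\frac{4i-3}{2i-1}<2$; since the right-hand side of the weakened recurrence is increasing in $s_{i-1}$, the substitution of the inductive hypothesis is legitimate and the step is airtight. Your argument is shorter, entirely free of asymptotic estimates, and yields the explicit uniform bound $s_{n+1}\le 4(n+1)$ for every $n$, which is strictly more informative than the stated $O(n)$; your closing remark that $s_i\ge s_{i-1}+2$ forces $s_n\ge 2n$ also shows the lemma is sharp up to the constant. What the paper's longer computation buys is an explicit picture of how the multiplicative factors $1+\frac{1}{2j+1}$ accumulate to roughly $\sqrt{N/c}$, but for the purposes of this lemma nothing is lost by your shortcut.
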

We will use this to find an upper bound for $s_{N+1}$. 
\begin{proof}
From equation (\ref{eq62}), it follows that 
\begin{align}
    s_i&=2+s_{i-1}+\frac{s_{i-1}+1}{2\hk{k_i-1}+1}\leq 2+s_{i-1}+\frac{s_{i-1}+1}{2\hk{i-1}+1}\notag\\
    &\leq s_{i-1}+2+\frac{s_{i-1}+2}{2\hk{i-1}+1}\notag\\
    &=\hk*{s_{i-1}+2}\hk*{1+\frac{1}{2\hk{i-1}+1}}.\label{eq64}
\end{align}
Starting at $s_{N+1}$ and applying equation (\ref{eq64}) repeatedly: 
\begin{align*}
    S_{N+1}&\leq \hk*{s_{N}+2}\hk*{1+\frac{1}{2N+1}}\\
    &= 2\hk*{1+\frac{1}{2N+1}}+s_N\hk*{1+\frac{1}{2N+1}}\\
    & \leq 2\hk*{1+\frac{1}{2N+1}}+\hk*{s_{N-1}+2}\hk*{1+\frac{1}{2\hk{N-1}+1}}\hk*{1+\frac{1}{2N+1}} \text{, (by (\ref{eq64}))}\\
    &= 2\hk*{\frac{2N+2}{2N+1}}+2\hk*{\frac{2\hk{N-1}+2}{2\hk{N-1}+2}}\hk*{\frac{2N+1}{2N+1}}+s_{N-1}\hk*{\frac{2\hk{N-1}+2}{2\hk{N-1}+2}}\hk*{\frac{2N+1}{2N+1}}.
\end{align*}
Repeat this process to obtain 
\begin{align*}
   s_{N+1} &\leq 2\hk*{\hk*{\frac{2N+2}{2N+1}}+\hk*{\frac{2\hk{N-1}+2}{2\hk{N-1}+2}}\hk*{\frac{2N+1}{2N+1}}+\cdots+\prod_{j=1}^{N}{\hk*{\frac{2j+2}{2j+1}}}}\\
   &+s_1\prod_{j=1}^{N}{\hk*{\frac{2j+2}{2j+1}}}\\
   &=2\sum_{c=1}^{N}{\prod_{j=c}^{N}{\hk*{\frac{2j+2}{2j+1}}}}+2\prod_{j=1}^{N}{\hk*{\frac{2j+2}{2j+1}}},\, s_1=2.
\end{align*}
Since $2\prod\limits_{j=1}^{N}{\hk*{\frac{2j+2}{2j+1}}}$ is just a single term of the preceding sum, it follows that 
\begin{equation}\label{eq65}
    s_{N+1} \leq 4\sum_{c=1}^{N}{\prod_{j=c}^{N}{\hk*{\frac{2j+2}{2j+1}}}}.
\end{equation}
To find an upper bound for this, consider the product
\begin{equation}\label{eq66}
    P(c)=\prod_{j=c}^{N}{\hk*{\frac{2j+2}{2j+1}}}=\prod_{j=c}^{N}{\hk*{1+\frac{1}{2j+1}}}, \, c\geq1.
\end{equation}
Taking the natural log on both sides and using the inequality $\log{(1+x)}\leq x$ for $x\geq 0$,
\begin{align}
    \log{P(c)}&=\sum_{j=c}^{N}{\log{\hk*{1+\frac{1}{2j+1}}}} \leq \sum_{j=c}^{N}{\frac{1}{2j+1}} \notag\\
    &=\sum_{j=1}^{N}{\frac{1}{2j+1}}-\sum_{j=1}^{c-1}{\frac{1}{2j+1}}.\label{eq67}
\end{align}
Note that $\frac{1}{2j+1}<\frac{1}{2j}$, hence
$$
\sum_{j=1}^{N}{\frac{1}{2j}}>\sum_{j=1}^{N}{\frac{1}{2j+1}}.
$$
Adding $\sum\limits_{j=1}^{N}{\frac{1}{2j+1}}$ to both sides of the previous inequality, we have
$$\sum_{j=2}^{2N+1}{\frac{1}{j}}>2\sum_{j=1}^{N}{\frac{1}{2j+1}}$$
i.e.
$$\frac{1}{2}\sum_{j=2}^{2N+1}{\frac{1}{j}}>\sum_{j=1}^{N}{\frac{1}{2j+1}}.$$
It is possible to use the property $\sum\limits_{i=1}^{n}{\frac{1}{i}}=\log{n}+\gamma+O\hk*{\frac{1}{n}}, $ where $\gamma$ is the Euler's constant.
\begin{equation}\label{eq68}
    \sum_{j=1}^{N}{\frac{1}{2j+1}}<\frac{1}{2}\hk*{\log{(2N+1)}+\gamma -1 +O\hk*{\frac{1}{2N+1}}}.
\end{equation}
Note that $\frac{1}{2j-1}>\frac{1}{2j}$ for $1\leq j\leq c$, hence, it follows 
$$
\sum_{j=2}^{c}{\frac{1}{2j}}<\sum_{j=1}^{c-1}{\frac{1}{2j+1}}.
$$
Once again, adding $\sum\limits_{j=1}^{c-1}{\frac{1}{2j+1}}$ to both sides, we get
\begin{equation}\label{eq69}
    \sum_{j=1}^{c-1}{\frac{1}{2j+1}}>\frac{1}{2}\sum_{j=3}^{2c}{\frac{1}{j}}=\frac{1}{2}\hk*{\log{(2c)}+\gamma -\frac{3}{2} +O\hk*{\frac{1}{2c}}}.
\end{equation}
Combining the inequalities from (\ref{eq67}), (\ref{eq68}) and (\ref{eq69}), gives the upper bound 
\begin{align*}
    \log{P(c)}&\leq \frac{1}{2}\hk*{\log{\hk*{\frac{2N+1}{2c}}}+\frac{1}{2} +O\hk*{\frac{1}{2c}}+O\hk*{\frac{1}{2N+1}}}\\
    &\leq \log{\hk*{\hk*{\frac{2N+1}{2c}}^{\frac{1}{2}}}}+O(1) \text{, since }c, N\geq 1.
\end{align*}
Therefore, 
\begin{equation}
    P(c)\leq\hk*{\frac{2N+1}{2c}}^{\frac{1}{2}}O(e)=\frac{(2N+1)^{1/2}}{(2c)^{1/2}}O(1).
\end{equation}
Substitute this into (\ref{eq65}) by using (\ref{eq66}):
\begin{align*}
    s_{N+1} &\leq 4\sum_{c=1}^{N}{P(c)}\\
    &\leq 4\sum_{c=1}^{N}{\frac{(2N+1)^{1/2}}{(2c)^{1/2}}O(1)}\\
    &=O\hk*{N^{1/2}\sum_{c=1}^{N}{\frac{1}{c^{1/2}}}}.
\end{align*}
Shifting the integral gives 
$$\sum_{c=1}^{N}{\frac{1}{c^{1/2}}}\leq\int_{0}^{N}{\frac{1}{c^{1/2}}dt} = O\hk*{N^{1/2}} .$$
Therefore,
\begin{equation}
     s_{N+1}=O(N)
\end{equation}
\end{proof}
Substituting this into (\ref{eq63}) yields 
\begin{equation}
    (n+1)_{m_N}\leq n_{m_N-1}+O\hk*{N}\prod_{i=0}^{m_N-2}{\hk*{\frac{l_{\beta-i}}{l_{\beta-i}-1}}}.\label{eq72}
\end{equation}
Note that by equation (\ref{eq41}), $n_{m_N-1}=q_{m_N-1}(l_{\beta-m_N+1}-1)+r_{m_N-1}$. Recall that based on the definition of $m_N$, we have $q_{m_N-1}=k_N=N=\ceil*{\frac{\sqrt{l_n}}{\sqrt{\log (l_n)}}}$. Thus
\begin{equation}\label{eq73}
    n_{m_N-1}=\ceil*{\frac{\sqrt{l_n}}{\sqrt{\log (l_n)}}}(l_{\beta-m_N+1}-1)+r_{m_N-1}.
\end{equation}
Since $(n_i)$ is an increasing sequence with $n_{\beta}=l_n$, $n_{m_N-1}\leq n_\beta=l_n$. Substituting this into (\ref{eq73}) yields 
$$l_n/\ceil*{\frac{\sqrt{l_n}}{\sqrt{\log (l_n)}}}\geq l_{\beta-m_N+1}-1+r_{m_N-1}/\ceil*{\frac{\sqrt{l_n}}{\sqrt{\log (l_n)}}}$$
i.e.
$$l_n/\ceil*{\frac{\sqrt{l_n}}{\sqrt{\log (l_n)}}}+1\geq  l_{\beta-m_N+1} \text{, since } r_{m_N-1}\geq 0.$$
Therefore, for sufficiently large $n$, we have
\begin{equation}
    l_{\beta-m_N+1} = O\hk*{l_n^{1/2}\log^{1/2}{l_n}}.
\end{equation}
Recall that formula (\ref{eq3.11}) uses lucky numbers in decreasing order, hence using the counting function, there are less than $L\hk*{O\hk*{l_n^{1/2}\log^{1/2}{l_n}}}=O\hk*{L\hk*{l_n^{1/2}\log^{1/2}{l_n}}}$ steps left in the formula (or terms left in $(n_i)$).
Applying the lucky number theorem for sufficiently large $n$, it follows that the number of steps left is
\begin{align}\notag
    O\hk*{L\hk*{l_n^{1/2}\log^{1/2}{l_n}}}&=O\hk*{\frac{l_n^{1/2}\log^{1/2}{l_n}}{\log{\hk*{l_n^{1/2}\log^{1/2}{l_n}}}}}\\
    &=O\hk*{\frac{l_n^{1/2}\log^{1/2}{l_n}}{\log{l_n} + \log{\log{l_n}}}}\notag\\
    &=O\hk*{\frac{l_n^{1/2}}{\log^{1/2}{(l_n)}}}.
\end{align}
Start with equation (\ref{eq72}) and once again use the fact that $\floor*{x+y}\leq\floor{x}+y+1$, for $x,y\geq0$. (Apply this to all the remaining steps of formula (\ref{eq3.11}).)
\begin{align*}
    (n+1)_{m_N+1}&\leq n_{m_N}+O\hk*{N}\prod_{i=0}^{m_N-1}{\hk*{\frac{l_{\beta-i}}{l_{\beta-i}-1}}}+1\\
    &\leq n_{m_N}+\hk*{O\hk*{N}+1}\prod_{i=0}^{m_N-1}{\hk*{\frac{l_{\beta-i}}{l_{\beta-i}-1}}}.
\end{align*}
Repeat the process up to $(n+1)_{\beta+1}=l_{n+1}$ (for $O\hk*{\frac{l_n^{1/2}}{\log^{1/2}{(l_n)}}}$ steps) to get 
\begin{align*}
    (n+1)_{m_N+2}&\leq n_{m_N+1}+\hk*{O\hk*{N}+2}\prod_{i=0}^{m_N}{\hk*{\frac{l_{\beta-i}}{l_{\beta-i}-1}}}\\
    (n+1)_{m_N+3}&\leq n_{m_N+2}+\hk*{O\hk*{N}+3}\prod_{i=0}^{m_N+1}{\hk*{\frac{l_{\beta-i}}{l_{\beta-i}-1}}}\\
    & \; \;\vdots\\
    (n+1)_{\beta+1}&\leq n_{\beta}+\hk*{O\hk*{N}+O\hk*{\frac{l_n^{1/2}}{\log^{1/2}{(l_n)}}}}\prod_{i=0}^{\beta-1}{\hk*{\frac{l_{\beta-i}}{l_{\beta-i}-1}}}, \text{ for sufficiently large } n.
\end{align*}
Substituting $(n+1)_{\beta+1}=l_{n+1}$, $n_\beta=l_n$ and $N=\ceil*{\frac{l_n^{1/2}}{\log^{1/2}{(l_n)}}}=O\hk*{\frac{l_n^{1/2}}{\log^{1/2}{(l_n)}}}$, it follows that
\begin{equation}\label{eq76}
    l_{n+1}= l_{n}+O\hk*{\frac{l_n^{1/2}}{\log^{1/2}{(l_n)}}\prod_{i=0}^{\beta-1}{\hk*{\frac{l_{\beta-i}}{l_{\beta-i}-1}}}}, \text{ for sufficiently large } n.
\end{equation}
By Mertens' theorem \cite{r2}, it follows that for large enough $n$,
\begin{equation}\label{eq77}
    \prod_{i=0}^{\beta-1}{\hk*{\frac{l_{\beta-i}}{l_{\beta-i}-1}}}=\prod_{l\leq n}{\hk*{\frac{l}{l-1}}}=O(\log{n}).
\end{equation}
Clearly, since $n\leq l_n$, we have $\prod_{i=0}^{\beta-1}{\hk*{\frac{l_{\beta-i}}{l_{\beta-i}-1}}}=O(\log{(l_n)})$. Therefore,
$$
l_{n+1}=l_n+O\hk*{l_n^{1/2}\log^{1/2}{l_n}}
$$
i.e.
\begin{equation}
    l_{n+1}-l_n=O\hk*{l_n^{1/2}\log^{1/2}{l_n}}, \text{ for sufficiently large } n.
\end{equation}
This concludes the proof of the first part of the theorem.
Also note that by substituting equation (\ref{eq77}) into (\ref{eq76}) directly and then using the lucky number theorem for large enough $n$, we have
\begin{align}
    l_{n+1}&= l_{n}+O\hk*{\frac{l_n^{1/2}}{\log^{1/2}{(l_n)}}\log{n}}\notag\\
    &= l_{n}+O\hk*{\frac{n^{1/2}\log^{1/2}{n}}{\log^{1/2}{(n\log{n})}}\log{n}}\notag\\
    &= l_{n}+O\hk*{n^{1/2}\log{n}}, \text{ for sufficiently large } n.
\end{align}
This concludes the proof of the theorem.

As a direct consequence of this theorem, we have the following corollary.
\begin{cor}
    For sufficiently large $x$, there is always a lucky number between $x$ and $x+O\hk*{\sqrt{x\log{x}}}$.

    Or, there exists a constant $M\in\mathbb{N}$ and a constant $c$ such that for all $a\in \mathbb{N}$, if $a\geq M$, then there is always a lucky number between $a^2$ and $\hk*{a+c\log a}^2$.
\end{cor}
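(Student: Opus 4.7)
The plan is to derive both formulations directly from Theorem \ref{thm4.2}, which states $l_{n+1}-l_n = O(\sqrt{l_n \log l_n})$. For the first claim, I would set $n = L(x)$, so that $l_n$ is the largest lucky number with $l_n \leq x$. The theorem then yields $l_{n+1} \leq l_n + O(\sqrt{l_n \log l_n})$, and since $l_n \leq x$ and $t \mapsto \sqrt{t\log t}$ is increasing for large $t$, this gives $l_{n+1} \leq x + O(\sqrt{x \log x})$ for all sufficiently large $x$. Because $l_{n+1} > x$ by the choice of $n$, the lucky number $l_{n+1}$ lies in the claimed interval.

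For the second formulation I would simply specialize $x = a^2$ in the first claim. This produces, for all $a$ beyond some threshold, a lucky number $l$ with
$$a^2 \leq l \leq a^2 + c_0 \sqrt{a^2 \log(a^2)} = a^2 + c_0 \sqrt{2}\, a \sqrt{\log a},$$
for some absolute constant $c_0$. To place $l$ inside $[a^2,(a+c\log a)^2]$ it suffices to compare with
$$(a+c\log a)^2 = a^2 + 2ca\log a + c^2\log^2 a.$$
Thus I need to verify the elementary inequality $c_0\sqrt{2}\,a\sqrt{\log a} \leq 2ca\log a$, equivalently $c_0\sqrt{2} \leq 2c\sqrt{\log a}$, which follows for any fixed $c>0$ once $a$ is large enough because $\sqrt{\log a}\to\infty$. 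Choosing $M$ to be the maximum of the threshold inherited from the first claim and the threshold forcing the above inequality completes the argument.

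There is no real obstacle here: the corollary is essentially a direct translation of Theorem \ref{thm4.2} into two equivalent geometric statements about intervals. The only care required is bookkeeping of the implicit constants and the ``sufficiently large'' thresholds, and verifying that the gap bound $O(\sqrt{x\log x})$ comfortably fits inside the window $(a+c\log a)^2 - a^2 = 2ca\log a + O(\log^2 a)$, which it does because $\log a$ grows strictly faster than $\sqrt{\log a}$.
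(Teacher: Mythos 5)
Your proposal is correct and matches the paper's intent: the paper states this corollary as a direct consequence of Theorem \ref{thm4.2} without further argument, and your derivation (taking $n=L(x)$ for the first form, then specializing $x=a^2$ and checking that $O(a\sqrt{\log a})$ fits inside the window $2ca\log a + c^2\log^2 a$ for the second) is exactly the routine translation being invoked. No gaps.
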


 For primes, Legendre's conjecture asks, is there always a prime between $a^2$ and $(a+1)^2$? This is still an open problem, and even solving the Riemann Hypothesis will not be strong enough to prove this result. An analogue of this conjecture would be is there always a lucky number between $a^2$ and $(a+1)^2$? If this is true, it would imply that $l_{n+1}-l_n=O(l_n^{1/2})$. Therefore, the result in the previous theorem is still slightly weaker than an analogue of Legendre's conjecture. However, it is the closest known result to it.
 Also note that these results could be extended to more general sequences of integers such as those defined in \cite{r3}.
\section{Conclusion}
 In this paper, we established multiple similarities between lucky numbers and prime numbers. We derived new formulas for lucky numbers, analogous to those obtained for other sieve-defined integer sequences. In addition, we proved a new result on the gaps between consecutive lucky numbers and proposed a fundamental theorem of arithmetic for lucky numbers. This theorem provides numerous new open questions and new potential areas for research. Many of the methods developed here can also be extended to more general sieve-generated sequences of integers. For example, some of these results, such as Theorem \ref{thm4.2} can be applied to sequences such as those studied in \cite{r3}.

Future research could focus on extending these ideas and addressing the questions that arise from Theorem \ref{thmA}. Several central problems remain unresolved \cite{r4}, including analogues of Legendre’s conjecture, Dirichlet’s theorem, the Goldbach conjecture, and the twin prime conjecture in the context of lucky numbers. It also remains unknown whether infinitely many lucky primes exist. We present many new problems such as finding the average order of the lucky arithmetical functions and a generalization of the lucky prime conjecture. Additionally, it is possible to ask a different version of an analogue of Dirichlet's theorem for lucky numbers: are there infinitely many lucky numbers in the arithmetic progression $a\circ n+b, \, n\in \mathbb{N}$ and some $a,b \in \mathbb{N}$? If there are, what values of $a,b$ will have this result? Similarly we may even ask are there infinitely many primes in the arithmetic progression $a\circ n+b, \, n\in \mathbb{N}$ and some $a,b \in \mathbb{N}$? Additionally, it is also possible to start studying Diophantine equations by using the new defined binary operations instead of multiplication or even combining multiplaction with the new binary operations.  
Clearly, the theory of lucky numbers is still in its early stages, with a wealth of conjectures and open problems to be explored. In particular, the fundamental theorem of arithmetic for lucky numbers suggests entirely new directions for research, offering a framework within which many further discoveries can be made.

Overall, the area of number theory that focuses on the study of lucky numbers and sequences of integers defined by sieves has great potential for future research.
\section*{Acknowledgments}
\begin{itemize}
    \item A special thanks to my girlfriend Lin\'e Crafford for her constant support and motivation throughout the year, and for always believing in me. 
\end{itemize}

\end{document}